\newcommand{\de}{\delta}
\newcommand{\e}{\varepsilon}
\newcommand{\la}{\lambda}
\newcommand{\R}{\mathbb R}
\newtheorem{theorem}{Theorem}[section]
\newtheorem{lemma}[theorem]{Lemma}
\newtheorem{remark}[theorem]{Remark}
\newtheorem{proposition}[theorem]{Proposition}
\newtheorem{corollary}[theorem]{Corollary}
\numberwithin{equation}{section}
\begin{document}
\title[Blow-up analysis]{Blow-up analysis for nodal radial solutions in Moser-Trudinger critical equations in $\R^2$}
\author[M. Grossi and D. Naimen]{Massimo Grossi$^\dag$ and  Daisuke Naimen$^\ddag$}
\thanks{The first author is  supported by PRIN-2015 grant (P.I. A. Malchiodi).
}
\address{$\dag$ Dipartimento di Matematica, Universit\`a di Roma ``La Sapienza'',      \texttt{massimo.grossi@uniroma1.it}}
 \address{$\ddag$ Muroran Institute of Technology, \texttt{naimen@mmm.muroran-it.ac.jp}}
\thanks{The second author was supported by Grant-in-Aid for JSPS Research Fellow Grant Number JP15J12092.
}
\begin{abstract}
In this paper we consider nodal radial solutions $u_\e$ to the problem
\[
\begin{cases}
-\Delta u=\lambda ue^{u^2+|u|^{1+\e}}&\text{ in }B,\\
u=0&\text{ on }\partial B.
\end{cases}
\]
and we study their asymptotic behaviour as $\e\searrow0$.\\ 
 We show that when $u_\e$ has $k$ interior zeros, it exhibits a multiple blow--up behaviour in the first $k$ nodal sets while it converges to the least energy solution of the problem with $\e=0$ in the $(k+1)$--th one. We also prove that in each concentration set, with an appropriate scaling, $u_\e$ converges to the solution of the classical Liouville problem in $\R^2$.
\vskip0.2cm
{\bf Keywords:} Radial solutions, Moser-Trudinger inequality.

{\bf AMS Subject Classifications:} 35B32, 35J61.
\end{abstract}

\maketitle

\section{Introduction}
In this paper we study the asymptotic behavior of  nodal radial solutions to
\begin{equation}
\begin{cases}\label{P}
-\Delta u=f_\e(u)&\text{ in }B\\
u=0&\text{ on }\partial B,
\end{cases}
\end{equation}
where $f_\e(s)$ is some smooth nonlinearity of Moser-Trudinger type depending on 
a  positive parameter $\e$ goings to zero. In what follows we will be more precise on the conditions on $f_\e(s)$. In \eqref{P} $B$ is the unit ball of $\R^2$. \\
First let us recall some classical results when $u$ is a {\em positive} solution of \eqref{P}. In this case our problem is linked to the celebrated Moser-Pohozaev-Trudinger inequality (\cite{M}, \cite{P}, \cite{T},) namely
\begin{equation}\label{i1}
\sup\limits_{\int_B|\nabla u|^2\le1}\int_Be^{4\pi u^2}
\le c
\end{equation}
for any $u$ in the Sobolev space $W^{1,2}_0(B)$.\\
In the pioneering paper \cite{CC} it was proved that the supremum in \eqref{i1} is achieved and  the corresponding Eulero-Lagrange equation is given by
\begin{equation}\label{i2}
\begin{cases}
-\Delta u=\frac{ue^{u^2}}{\int_Bu^2e^{u^2}}&\text{ in }B\\
u=0&\text{ on }\partial B,
\end{cases}
\end{equation}
Now we consider the related (but not equivalent) problem
\begin{equation}\label{i3}
\begin{cases}
-\Delta u=\la ue^{u^2} &\text{ in }B\\
u=0&\text{ on }\partial B.
\end{cases}
\end{equation}
In \cite{OS} there is an interesting discussion on relationship between \eqref{i2} and \eqref{i3}.\\
In \cite{A} it was proved the existence of solutions to  \eqref{i3} for any $\la\in(0,\la_1)$ where $\la_1$ is the first eigenvalue of $-\Delta$ with Dirichlet boundary conditions (see also \cite{FMR}). As $\la\rightarrow0$, the corresponding solution $u_\la$ concentrates around the origin and its asymptotic behavior was studied in \cite{OS} and  \cite{AD}.\\
These results hold also for more general problems like
\begin{equation}\label{i6}
\begin{cases}
-\Delta u=\la f(u)e^{u^2} &\text{ in }B\\
u=0&\text{ on }\partial B.
\end{cases}
\end{equation}
We refer to \cite{A} and  \cite{FMR} for the precise assumptions on $f$.\\
If we consider the case of sign changing {\em radial} solutions  we find some interesting differences. Indeed, in \cite{AY} the authors showed that in order to have existence results of nodal solutions we need to impose  some restrictions on the nonlinearity $f$ in \eqref{i6}. A particular case is the following,
\begin{theorem}\label{tt1}(See \cite{AY} and \cite{AY1})
Let us consider the problem
\begin{equation}\label{i7}
\begin{cases}
-\Delta u=\la ue^{u^2+|u|^\beta} &\text{ in }B\\
u=0&\text{ on }\partial B.
\end{cases}
\end{equation}
Then we have that,\vskip0.2cm
$i)$ if $1<\beta<2$ there exists a radial solution with $k$ interior zeros for any integer $k\ge1$ and for any $\la\in(0,\la_1)$,\vskip0.2cm
$ii)$ if $0\le\beta\le1$ there exists $\la=\la_{AY}>0$ such that for any $0<\la<\la_{AY}$ there exist no solution to \eqref{i7}.
\end{theorem}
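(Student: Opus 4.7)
The proof splits naturally into two pieces, handled by essentially different tools. Part (i) is a construction of nodal radial solutions, while part (ii) is an obstruction argument for $\lambda$ small.

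For (i), assuming $1<\beta<2$ and $\lambda\in(0,\lambda_1)$, my plan is to work in the radial subspace $H^1_{0,\mathrm{rad}}(B)$ and look for critical points of
\[
J_\lambda(u)=\tfrac{1}{2}\int_B|\nabla u|^2\,dx-\lambda\int_B G(u)\,dx,\qquad G'(s)=s\,e^{s^2+|s|^\beta},
\]
that have exactly $k$ interior nodal spheres. The natural construction is a constrained minimization on a nodal Nehari-type set where each of the $k+1$ nodal components $u^{(i)}$ is required to be a critical point of $J_\lambda$ in its own right. The condition $\beta>1$ is essential because it forces strict superquadratic behaviour of $G$ and a genuine Ambrosetti--Rabinowitz-type condition, so that the Nehari projection on each nodal piece is well-defined and carries energy bounded away from zero. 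The condition $\beta<2$ keeps the nonlinearity in the critical Moser--Trudinger regime, so that minimizing sequences restricted to each nodal piece can be taken with $\int|\nabla u_n|^2<4\pi$, allowing one to invoke the Moser--Trudinger compactness theory and extract a genuine radial solution with exactly $k$ interior zeros. An alternative and arguably more direct route for radial problems on the ball is ODE shooting: one solves the Bessel-type ODE
\[
-u''(r)-\tfrac{1}{r}u'(r)=\lambda u(r)\,e^{u(r)^2+|u(r)|^\beta},\qquad u'(0)=0,
\]
with $u(0)=\alpha$, and studies the zero count $N(\alpha)$ of $u_\alpha$ on $(0,1)$; the range of $\beta$ ensures that $N$ is continuous and unbounded as $\alpha\to\infty$, so every integer $k\geq 1$ is attained.

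For (ii), with $\beta\in[0,1]$, I would argue by contradiction. Assume a radial solution $u$ exists with nodal radii $0<r_1<\cdots<r_k<1$, and consider each nodal annulus $A_i$ on which $u$ is sign-definite and solves the same equation with zero boundary data. Testing against $u$ on $A_i$ and combining with the Moser--Trudinger inequality in $A_i$ yields a strictly positive universal lower bound on $\int_{A_i}|\nabla u|^2$, independent of $\lambda$. On the other hand, a Pohozaev-type identity applied on each annulus produces an upper bound of the form $C\lambda$ on the same Dirichlet energy: for $\beta\leq 1$ the subquadratic correction $|u|^\beta$ in the exponent no longer produces enough superlinearity to absorb the boundary terms that appear in the identity. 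Choosing $\lambda_{AY}$ to be the threshold at which the two bounds become incompatible gives the nonexistence range.

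The hardest part of this programme is the critical exponential growth. In (i) one must ensure that Moser--Trudinger compactness is preserved at every step of the construction, particularly when passing the nodal Nehari constraint through a minimizing sequence, or, in the shooting picture, that one can continue $u_\alpha$ past every would-be singularity. In (ii) the difficulty is producing a sharp Pohozaev-type identity in two dimensions that cleanly isolates the contribution of the $|u|^\beta$ correction from the principal $e^{u^2}$ factor. The exponent $\beta=1$ appears as the exact borderline between these two regimes, and it is the matching of the two effects that forces the dichotomy stated in the theorem.
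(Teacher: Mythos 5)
The paper does not prove Theorem \ref{tt1} at all: it is quoted from \cite{AY} and \cite{AY1}, so the only meaningful comparison is with those works and with the machinery the paper recalls in Section \ref{s0}. For part $(i)$ your nodal Nehari construction is indeed the strategy of \cite{AY1} (and of the set $\mathcal{N}_{k,\e}$ used in this paper), but your explanation of why $\beta>1$ is needed is off target. An Ambrosetti--Rabinowitz condition and a well-defined Nehari projection hold for every $\beta\ge 0$; the nonlinearity $s e^{s^2+|s|^\beta}$ is always superquadratic. What $\beta>1$ actually buys is the \emph{strict} energy estimate: testing with Moser functions, the level of each nodal component must be pushed strictly below the noncompactness threshold $2\pi$, and the computation (the analogue of Lemma \ref{lem:energyup} here) closes only if the correction $e^{|t|^\beta}$ contributes a \emph{superlinearly} growing term in the exponent. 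Your shooting alternative, with the bare assertion that the zero-count $N(\alpha)$ is ``continuous and unbounded,'' is exactly the point that fails for $\beta\le 1$ and cannot be waved through for an exponentially critical nonlinearity.

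Part $(ii)$ contains a genuine gap. You claim a Pohozaev identity gives an upper bound $C\lambda$ on $\int_{A_i}|\nabla u|^2$, to be contradicted by a $\lambda$-independent lower bound. That upper bound is false: for radial solutions of $-\Delta u=f(u)$ the Pohozaev identity on a nodal annulus reads, in the first component, $\tfrac12\bigl(r_1u'(r_1)\bigr)^2=2\int_0^{r_1}F(u)\,r\,dr$, which controls the boundary flux by $\int F(u)$, not the Dirichlet energy by $C\lambda$; and indeed the present paper shows (Lemma \ref{lem:energy3}, Corollary \ref{cor:ball}) that each interior nodal component of the solutions that \emph{do} exist carries energy $\to 4\pi$, uniformly in $\lambda$. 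The actual mechanism in \cite{AY} is the coupling between consecutive nodal regions: for small $\lambda$ the inner component must concentrate, the ratio $F(s)/\bigl(sf(s)\bigr)\sim (2s^2)^{-1}$ then makes the transmitted flux $r_1u'(r_1)$ too small to generate a second sign change unless the exponent correction is superlinear, i.e.\ unless $\beta>1$. Your sketch never compares adjacent nodal regions, so the place where $\beta\le 1$ enters is missing, and the dichotomy at $\beta=1$ is asserted rather than derived.
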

From this result we get that the nonlinearity $f(s)=se^{s^2+s}$ is the {\em border line case} between the existence and nonexistence of nodal solutions.  Hence it becomes interesting to study the asymptotic behavior of the solution $u_\e$ in  \eqref{i7} as $\beta=1+\e$, $0<\la<\la_{AY}$ and $\e\searrow0$. \\
In order to state our main result we need to introduce some notations. First let us denote by $u_0$ the solution of
\begin{equation}
\begin{cases}\label{i9}
-\Delta u=\lambda ue^{u^2+u}&\text{ in }B,\\
u>0&\text{ in }B,\\
u=0&\text{ on }\partial B.
\end{cases}
\end{equation}
Next, for $u\in H^1_0(B)$ and $\e\ge0$ let us consider the functional
\begin{equation}
I_\e(u)=\frac12\int_B|\nabla u|^2-\int_BF_\e(u)
\end{equation}
where $F_\e(s)=\lambda\int_0^ste^{t^2+|t|^{1+\e}}dt$.  We have the following result,
\begin{theorem}[Global behavior]\label{i10}
Let $u_\e$ be a nodal radial solution obtained in \cite{AY1} which verifies
\begin{equation}
\begin{cases}\label{i11}
-\Delta u=\lambda ue^{u^2+|u|^{1+\e}}&\text{ in }B,\\
u=0&\text{ on }\partial B.
\end{cases}
\end{equation}
with $k$ interior zeros denoted by $0=r_{0,\e}<r_{1,\e}<r_{2,\e}<\dots<r_{k,\e}<1=r_{k+1,\e}$. Assume that $u_\e(0)>0$.
Then we have that, as $\e\rightarrow0$ and $0<\la<\la_{AY}$,
\begin{equation}\label{i99}
u_\e(x)\rightarrow(-1)^ku_0(x)\quad\hbox{ in }C_{\text{loc}}^2(B\setminus\{0\})
\end{equation}
\begin{equation}\label{i98}
r_{i,\e}\rightarrow0\quad\hbox{ for any }i=1,\dots,k,
\end{equation}
\begin{equation}\label{i97}
\int_B|\nabla u_\e|^2\rightarrow \int_B|\nabla u_0|^2+4k\pi,
\end{equation}
\begin{equation}\label{i96}
I_\e(u_\e)\rightarrow I_0(u_0)+2k\pi.
\end{equation}
\end{theorem}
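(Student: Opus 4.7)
The plan is to run a concentration-compactness and blow-up analysis driven by the nonexistence statement in Theorem \ref{tt1}$(ii)$. Denote the nodal annuli by $A_\e^{(i)}:=\{r_{i-1,\e}<|x|<r_{i,\e}\}$ ($i=1,\dots,k+1$), the peak values by $M_{i,\e}:=\max_{\overline{A_\e^{(i)}}}|u_\e|$, attained at radii $s_{i,\e}$; by radial sign alternation, $(-1)^ku_\e>0$ on $A_\e^{(k+1)}$. If $\|u_\e\|_{L^\infty(B)}$ stayed bounded along a subsequence, elliptic regularity would extract a $C^2(\overline B)$ radial limit $u$ solving \eqref{i7} with $\beta=1$ and with at most $k$ interior zeros; a lower bound $\|u_\e\|_\infty\ge c>0$ inherited from the minimax level used in \cite{AY1} rules out $u\equiv 0$, contradicting Theorem \ref{tt1}$(ii)$. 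Hence $\|u_\e\|_\infty\to\infty$ and blow-up must occur.

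The next task is to localize the blow-up to the inner annuli. The key claim is that $\|u_\e\|_{L^\infty(A_\e^{(k+1)})}=O(1)$: otherwise a blow-up at a maximizer inside $A_\e^{(k+1)}$ would create one more Moser-Trudinger bubble carrying $4\pi$ of Dirichlet energy, pushing $I_\e(u_\e)$ above the minimax threshold of \cite{AY1}. Once this $L^\infty$ bound on the outer annulus is in place, elliptic estimates give, along a subsequence, $(-1)^ku_\e\to u^\star$ in $C^2_{\text{loc}}(\overline B\setminus\{\rho_*\})$, where $\rho_*:=\lim r_{k,\e}$ and $u^\star\ge 0$ is a radial solution of \eqref{i9} on the annulus $\{\rho_*<|x|<1\}$. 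A Pohozaev identity on $B_{\rho_*}$, balanced against the $4\pi$ energy quanta from the inner bubbles established below, forces $\rho_*=0$—this gives \eqref{i98}—and uniqueness of the positive radial solution to \eqref{i9} identifies $u^\star=u_0$, which is \eqref{i99}.

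For the inner annuli $A_\e^{(i)}$ with $1\le i\le k$, I would use the Adimurthi-Druet rescaling
\[
\rho_{i,\e}^{-2}:=2\la M_{i,\e}^2\exp\bigl(M_{i,\e}^2+M_{i,\e}^{1+\e}\bigr),\qquad v_{i,\e}(y):=2M_{i,\e}\bigl(\mathrm{sgn}(u_\e(s_{i,\e}))\,u_\e(s_{i,\e}+\rho_{i,\e}y)-M_{i,\e}\bigr).
\]
On the peak scale one has $u_\e^2=M_{i,\e}^2+v_{i,\e}+O(M_{i,\e}^{-2})$ and $|u_\e|^{1+\e}=M_{i,\e}^{1+\e}+o(1)$, so $v_{i,\e}$ solves an equation whose right-hand side converges to $e^{v}$. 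By the Chen-Li classification of entire finite-mass solutions to $-\Delta V=e^{V}$ one obtains $v_{i,\e}\to V$ in $C^2_{\text{loc}}(\R^2)$, with $V$ the standard Liouville bubble satisfying $\int_{\R^2}e^V=8\pi$; this yields the claim that in each concentration set $u_\e$ converges, after scaling, to a classical Liouville profile, and gives $\int_{A_\e^{(i)}}|\nabla u_\e|^2\to 4\pi$ for every $i=1,\dots,k$. Summing these $k$ contributions with the outer contribution $\int_B|\nabla u_0|^2$ gives \eqref{i97}; finally, \eqref{i96} follows by testing \eqref{i11} with $u_\e$ and comparing to the definition of $I_\e$, using the explicit expansion of $F_\e(u_\e)$ on each bubble.

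The main obstacle I expect is the coupled control of the $k$ inner concentrations and the residual profile: showing that exactly $k$ bubbles form, that the scales $\rho_{i,\e}$ are mutually well separated with $s_{i,\e}\to 0$, and that no energy is lost in the necks between successive bubbles or between the innermost bubble and the outer profile $u_0$. I would tackle this by an induction from the innermost annulus outward, applying Pohozaev identities on shrinking annuli at each step to pin down the intermediate energy balance, and exploiting the subcriticality of the exponent $|u|^{1+\e}$ as $\e\to 0$ to rule out bubble merging or additional concentration away from the origin.
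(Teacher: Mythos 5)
Your global picture (exactly $k$ bubbles of Dirichlet energy $4\pi$ plus the limit profile $u_0$ in the outermost nodal region) is the right one, but two steps at the heart of your plan have genuine gaps. First, everything in your argument that excludes extra concentration -- the $L^\infty$ bound on $A_\e^{(k+1)}$, the claim that exactly $k$ bubbles form, the Pohozaev balance forcing $\rho_*=0$ -- is driven by ``the minimax threshold of \cite{AY1}'', i.e.\ by the quantitative upper bound $I_\e(u_\e)\le 2\pi k+I_0(u_0)+o(1)$. This bound is not free: it is the content of the paper's Lemma \ref{lem:energyup} and requires building an explicit competitor in $\mathcal{N}_{k,\e}$ out of $k$ truncated Moser functions glued to $u_0$, with a delicate choice of radii \eqref{eq:parameters} so that each Nehari normalization constant $t_{i,\e}^2$ stays below $4\pi$. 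Without proving this upper bound (and the matching lower bound $\liminf_\e I_\e(u_{i,\e})\ge 2\pi$ on each inner component, Lemma \ref{lem:energy3}), your exclusion arguments are circular. Note also that the paper obtains \eqref{i98} \emph{before} any blow-up analysis, by the radial lemma plus the nonexistence result for $\la<\la_{AY}$ (Lemma \ref{lem:kthzero}); your route derives \eqref{i98} from the $4\pi$ quanta of the inner bubbles, while the inner blow-up (Lemma \ref{lem:infty}) itself uses $r_{i,\e}\to0$ to show the peaks diverge -- another circularity to untangle. Your opening step is also not sound as stated: if $\|u_\e\|_\infty$ were bounded, the $C^2$ limit could a priori be a \emph{one-signed} solution (the interior zeros may coalesce or run to the origin, which is in fact what happens), so Theorem \ref{tt1}$(ii)$, which excludes only nodal solutions, gives no contradiction.

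The more serious problem is the local analysis on the inner annuli. You rescale in $\R^2$ around a point $s_{i,\e}$ on the peak circle and invoke the Chen--Li classification to get the planar Liouville bubble. But that limit is only correct if the peak radius is much \emph{smaller} than the concentration scale, i.e.\ $|s_{i,\e}|=o(\rho_{i,\e})$, so that the blow-up is effectively centred at the origin and retains the full radial Laplacian. This is exactly what the paper must prove (Case 3 of Lemma \ref{lem:4-step3}). If instead $|s_{i,\e}|/\rho_{i,\e}\to\infty$ -- the scenario your rescaling tacitly assumes, since only then does the curvature term disappear and the equation flatten to $-\Delta V=e^V$ near a point of the circle -- the limit depends on one variable only, solves the one-dimensional Liouville equation $-z''=e^z$, has infinite mass in $\R^2$ (Chen--Li is inapplicable), and the annulus would carry Dirichlet energy of order $|s_{i,\e}|/\rho_{i,\e}\to\infty$, contradicting the energy bound: this is the paper's Case 1, which is \emph{ruled out}. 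The intermediate regime $|s_{i,\e}|/\rho_{i,\e}\to m\in(0,\infty)$ produces a singular Liouville profile with mass strictly above the $4\pi$ quantum (Case 2, also ruled out, and the one that actually occurs for Lane--Emden, cf.\ Remark \ref{R}). So the trichotomy on the location of the maximum relative to the blow-up scale is not a technicality you can skip: it is the main structural content of the proof, it is decided by the energy quantization, and your proposal assumes its answer -- in the geometrically opposite direction -- rather than establishing it. Relatedly, the expansion $u_\e^2=M_{i,\e}^2+v_{i,\e}+O(M_{i,\e}^{-2})$ presupposes local $C^0$ bounds on the rescaled function, which in the paper come from an ODE integration argument and are not automatic.
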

\begin{theorem}[Local behavior]\label{i12}
Let $u_\e$ be the solution considered in the previous theorem. Then for $i=0,\dots,k$ let $M_{i,\e}\in(r_{i-1,\e},r_{i,\e})=A_{i,\e}$  be the points such that $u_\e(M_{i,\e})=||u_\e||_{L^\infty(A_{i,\e})}$ (we have that $M_{1,\e}=0$). Then if $\de_{i,\e}$  is defined as $\delta_{i,\e}=r_{i,\e}\gamma_{i,\e}$ with
\begin{equation}
2\la r_{i,\e}^2e^{||u_\e||_{L^\infty(A_{i,\e})}^{2}+||u_\e||_{L^\infty(A_{i,\e})}^{1+\e}}||u_\e||_{L^\infty(A_{i,\e})}\gamma_{i,\e}^2=1
\end{equation}
we have that $\delta_{i,\e}\to 0$ and
\begin{equation}
\begin{split}
2||u_\e||_{L^\infty(r_{i-1,\e},r_{i,\e})}(u_\e\left(M_{i,\e}+\de_{i,\e}r\right)&-||u_\e||_{L^\infty(r_{i-1,\e},r_{i,\e})})\\&\rightarrow\log\frac1{\left(1+\frac{r^2}8\right)^2}\quad\hbox{in }C^1_{loc}(0,+\infty).
\end{split}
\end{equation}
\end{theorem}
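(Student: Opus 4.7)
The strategy is a standard blow-up analysis tailored to the Moser--Trudinger nonlinearity. Assume without loss of generality that $u_\e(M_{i,\e})=s_\e:=\|u_\e\|_{L^\infty(A_{i,\e})}>0$ (the case $u_\e<0$ on the annulus is symmetric). The first step is to verify $\de_{i,\e}\to 0$, which follows by combining $s_\e\to+\infty$ and $r_{i,\e}\to 0$ (both given by Theorem \ref{i10}) with the defining relation for $\gamma_{i,\e}$: the exponential factor $e^{s_\e^{2}+s_\e^{1+\e}}$ forces $\de_{i,\e}$ to be super-exponentially small in $s_\e$.

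Next, introduce $v_\e(r):=2s_\e(u_\e(M_{i,\e}+\de_{i,\e}r)-s_\e)$ and compute its equation by plugging the change of variables $\rho=M_{i,\e}+\de_{i,\e}r$ into the radial ODE for $u_\e$. A direct calculation yields
\[
-v_\e''(r)-\frac{\de_{i,\e}}{M_{i,\e}+\de_{i,\e}r}\,v_\e'(r)=2\la s_\e\de_{i,\e}^{2}\,u_\e(\rho)\,e^{u_\e(\rho)^{2}+|u_\e(\rho)|^{1+\e}}.
\]
Inverting $u_\e=s_\e+v_\e/(2s_\e)$ gives $u_\e^{2}-s_\e^{2}=v_\e+O(v_\e^{2}/s_\e^{2})$ and $|u_\e|^{1+\e}-s_\e^{1+\e}=O(v_\e\,s_\e^{\e-1})$; substituting into the right-hand side and invoking the normalization of $\gamma_{i,\e}$ collapses it to $(1+o(1))e^{v_\e(r)+o(1)}$ on every set where $v_\e$ is uniformly controlled. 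Meanwhile, the coefficient of $v_\e'$ converges to $1/r$ provided that $M_{i,\e}/\de_{i,\e}\to 0$ (which is automatic for $i=1$ because $M_{1,\e}=0$).

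For the third step I would establish a priori bounds. By construction $v_\e(0)=0$ and $v_\e'(0)=0$ (since $M_{i,\e}$ is a critical point of the radial profile), and $v_\e\leq 0$ on a neighbourhood of the origin; combined with the exponential form of the right-hand side, an ODE comparison gives uniform local bounds for $v_\e$, hence $C^1_{\mathrm{loc}}(0,+\infty)$ compactness. Extracting a subsequence, any limit $v_{0}$ solves $-\Delta v_{0}=e^{v_{0}}$ radially in $\R^{2}$ with $v_{0}(0)=0$ and $v_{0}'(0)=0$; the classification of finite-mass solutions of the planar Liouville equation then forces $v_{0}(r)=\log\frac{1}{(1+r^{2}/8)^{2}}$, and uniqueness of the limit propagates to the whole family.

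The main obstacle is the annular geometry in the regions $i\geq 2$. Unlike the usual blow-up at the origin, the peak $M_{i,\e}$ lies in the interior of an annulus, and one must show $M_{i,\e}/\de_{i,\e}\to 0$ so that on the blow-up scale the annulus looks like a full neighbourhood of the origin in $\R^{2}$, giving the $2$D radial Laplacian in the limit rather than a $1$D one. Establishing this refined ratio requires finer information on the locations of the nodes $r_{i,\e}$ and the peaks $M_{i,\e}$ beyond the bare convergence $r_{i,\e}\to 0$ from Theorem \ref{i10}, together with a careful comparison of the scales of successive nodal annuli.
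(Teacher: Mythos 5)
Your outline reproduces the paper's strategy for $i=0$ (where Proposition \ref{prop:ball} simply invokes Adimurthi--Druet) and for the rescaling and compactness steps on the annuli, but the point you defer to the final paragraph --- proving $M_{i,\e}/\de_{i,\e}\to 0$ and, equally importantly, $r_{i-1,\e}/\de_{i,\e}\to0$ --- is precisely the heart of the proof for $i\ge1$, and your proposal contains no argument for it. This is not a technicality that general blow-up machinery settles: if instead $(r_{i-1,\e}-M_{i,\e})/\de_{i,\e}\to -l<0$ and $M_{i,\e}/\de_{i,\e}\to m\ge l$, the blow-up limit is a \emph{singular} radial Liouville profile $Z(s)=\log\frac{4\alpha^2m^{\alpha+2}s^{\alpha-2}}{\left((\alpha+2)m^\alpha+(\alpha-2)s^\alpha\right)^2}$ with $\alpha=\sqrt{2m^2+4}$, and this alternative genuinely occurs in closely related problems (see Remark \ref{R}: for the Lane--Emden problem the nodal annuli do concentrate on singular Liouville solutions). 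So quantitative input specific to this problem is indispensable.

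The paper supplies that input in Lemma \ref{lem:4-step3} through a trichotomy on $\lim(r_\e-M_\e)/\gamma_\e$ combined with the energy quantization $I_\e(u_{i,\e})\to2\pi$, equivalently $\la r_{i,\e}^2\int v_\e^2e^{v_\e^2+v_\e^{1+\e}}r\,dr\to2$, established beforehand in Lemma \ref{lem:energy3}. If $(r_\e-M_\e)/\gamma_\e\to-\infty$ then $M_\e/\gamma_\e\to\infty$, the limit equation is the one-dimensional $-z''=e^z$, and Fatou's lemma produces a mass of order $M_\e/\gamma_\e\to\infty$, contradicting the energy bound. If $(r_\e-M_\e)/\gamma_\e\to-l<0$, a mean-value argument (using that $z_\e\to-\infty$ at the inner endpoint) forces $m=l$, and then Fatou gives $2\ge\frac12\int_0^\infty e^{Z(s)}s\,ds=\sqrt{2m^2+4}>2$, again a contradiction. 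Only $m=l=0$ survives, which is exactly the ratio statement you left open. A secondary gap: your classification step assumes the limit is an entire radial solution of the Liouville equation with $v_0(0)=v_0'(0)=0$, but after excluding the bad cases the limit equation lives on $(0,+\infty)$ with the origin a possible singularity, and the normalization $v_0'(0)=0$ need not pass to the limit there; the paper instead writes down the full two-parameter family of radial solutions of $-z''-z'/r=e^z$ on $(0,+\infty)$ and uses only finiteness of $z(0)$ together with $z(0)=0$ to single out $\log\left(1+r^2/8\right)^{-2}$. Without the quantization results of Section \ref{s0}, your argument cannot be completed.
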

\begin{remark}
Another interesting problem with similar behavior is given by
\begin{equation}
\begin{cases}
-\Delta u=\lambda ue^{u^{2-\e}}&\text{ in }B,\\
u=0&\text{ on }\partial B.
\end{cases}
\end{equation}
As for \eqref{i11} it is possible to show that there exists a family of nodal solutions $u_\e$ for any $\e>0$. Despite the nonlinearity is not covered by the assumptions of Theorem 1.3 in \cite{AY1} we can still repeat the proof in order to get the existence result. Moreover the result in \cite{AY} applies and so there exists a constant $\bar\la$ such that for any $\la\in(0,\bar\la)$ there exists no sign changing solution.\\
It is possible to show that analogous results like in Theorems \ref{i10} and  \ref{i12} hold. The interest in this type of nonlinearity is given by the similarity with the analogous in higher dimension (see problem \eqref{i16} and the comments below).
\end{remark}
\begin{remark}
Similar phenomena to Theorem \ref{tt1}, \ref{i10} and \ref{i12} appears in higher dimensions for the problem
\begin{equation}
\begin{cases}
-\Delta u=|u|^\frac4{N-2}u+\la u&\text{ in }B,\\
u=0&\text{ on }\partial B.
\end{cases}
\end{equation}
where $N\ge3$ and $B$ is the unit ball of $\R^N$.\\
In \cite{ABP} it was proved that if $N=4,5,6$ there exists $\la^*>0$ such that there is no nodal radial solution for $0<\la<\la^*$. The asymptotic behavior of the solution $u_\la$ as $\la\to\overline{\la}$ for a limit value $\overline{\la}>0$ and $N=4,5,6$ was studied in \cite{IP}. Note that the case $N=6$ has strong similarities with our results for the case $k=1$. Other existence results for $N=4,5$ can be founded in \cite{IV}.
\end{remark}
It is interesting to compare the previous results with other similar problems like
\begin{equation}\label{i13}
\begin{cases}
-\Delta u=|u|^{p-1}u\text{ in }B\subset\R^2\\
u=0\text{ on }\partial B,
\end{cases}
\end{equation}
(see \cite{GGP}) and
\begin{equation}\label{i14}
\begin{cases}
-\Delta u=\lambda\sinh\text{ in }B\subset\R^2\\
u=0\text{ on }\partial B.
\end{cases}
\end{equation}
(see \cite{GP}).\\
Both this problems share the feature that suitable transformations of positive solutions converge to the limit problem 
\begin{equation}\label{i15}
\begin{cases}
-\Delta u=e^u\text{ in }\R^2\\
\int_{\R^2}e^u<+\infty
\end{cases}
\end{equation}
We want to compare Theorems \ref{i10} and  \ref{i12} with the analogous ones for  \ref{i13} and  \ref{i14}.\\
The global behavior is different: indeed solutions founded in \cite{GP} converge to  suitable multiple of the Green function which does not belong to $W^{1.2}_0(B)$ and solutions studied in \cite{GGP} goes to $0$ everywhere.\\
However more striking differences appear if we look at the local behavior. Indeed, in this case the solutions to  \eqref{i13} and  \eqref{i14} involve the singular Liouville problem
\begin{equation}
\begin{cases}
-\Delta u=|x|^\alpha e^u\text{ in }\R^2\\
\int_{\R^2}|x|^\alpha e^u<+\infty,
\end{cases}
\end{equation}
for some suitable positive number $\alpha$. We refer to \cite{GGP} and \cite{GP} for more precises  statements. In our case the local behavior of the solution is again related to the problem \eqref{i15}. In some sense our problem is more similar to the ``almost critical'' problem in higher dimensions $N\ge3$ given by
\begin{equation}\label{i16}
\begin{cases}
-\Delta u=|u|^{\frac4{N-2}-\e}u\text{ in }B\subset\R^N\\
u=0\text{ on }\partial B.
\end{cases}
\end{equation}
In this case the local behavior of nodal solutions is given by the (unique) positive smooth solution of the limit problem (see \cite{BEP}, \cite{DIP} and \cite{PW})
\begin{equation}
-\Delta u=u^\frac{N+2}{N-2}\text{ in }\R^N.
\end{equation}
In our opinion this similarity is due to the effect of nonlinearity which is very close to those in Moser-Trudinger inequality.\\
The paper is organized as follows: in Section \ref{s0} we prove some energy estimates for the solution $u_\e$. In Section \ref{sec:ball} we study the behavior of $u_\e$ in the ball $B_{r_{1,\e}}$ where $r_{1,\e}$ is the first zero of $u_\e$. In Section \ref{sec:annulus} and \ref{sec:outer} we consider the behavior of $u_\e$ in the other annular regions and in Section \ref{sec:proof} we give the proof of Theorems \ref{i10} and  \ref{i12}. Finally in Appendix \ref{sec:basic} we prove some technical lemmas.
\vskip0.1cm
For all $u\in H^1_0(B)$, we define $\|u\|:=\left(\int_{B}|\nabla u|^2dx\right)^{1/2}$. In addition, let $B(0,r):=B_r$ and $B(r,s):=B_{s}\setminus B_{r}$ for $r,s>0$. 
\section{Energy estimates for $u_\e$}\label{s0}
In the following, we always assume $0<\lambda<\min\{\lambda_1,\lambda_{\text{AY}}\}$ and we consider the least energy nodal solution $u_\e$ of \eqref{i11} obtained by Theorem 1.3 in  \cite{AY1}. More precisely, we define $H_{\text{r},0}^1(B)$ as a subspace of $H^1_0(B)$ which consists of all the radial functions and by the Nehari manifold
\[
\mathcal{N}_{\e}=\left\{u\in H^1_{\text{r},0}(B)\setminus\{0\}\ |\ \int_{B}|\nabla u|^2dx=\int_Bf_{\e}(u)udx\right\},
\]
and for $k\in\mathbb{N}$, 
\[
\begin{split}
\mathcal{N}_{k,\e}:=&\Big\{u\in H^1_{\text{r},0}(B)\ |\ \exists r_i\in(0,1);\ 0=r_0<r_1<\cdots<r_{k+1}=1, \\&\ \ u(r_i)=0,\  u_i:=u|_{B_{(r_{i-1},r_i)}},(-1)^{i-1}u_i>0,\ u_i\in \mathcal{N}_\e,\ 1\le i\le k+1\Big\}.
\end{split}
\]
Then let $u_{\e}\in \mathcal{N}_{k,\e}$ be a solution to \eqref{i11} such that 
\[
I_{\e}(u_{\e})=\inf_{u\in \mathcal{N}_{k,\e}}I_{\e}(u).
\]
We choose constants $0=r_{0,\e}<r_{1,\e}<\cdots<r_{k,\e}<r_{k+1,\e}=1$ so that $u_\e(r_{i,\e})=0$ for $i=1,2,\cdots,k$. Moreover, for each $i=1,2,\cdots,k+1$, define $u_{i,\e}:=u_\e|_{B(r_{i-1,\e},r_{i})}$ with zero extension to whole $B$.

First let us show a suitable upper bound for $I_\e(u_{\e})$. To this end, we  use the Moser function defined in \cite{A1}. For $0<l<R\le1$, we define 
\[
m_{l,R}(x):=\frac{1}{\sqrt{2\pi}}\begin{cases} \left(\log{\frac{R}{l}}\right)^{\frac{1}{2}}&\ 0\le|x|<l\\
\frac{\log{\frac{R}{|x|}}}{\left(\log{\frac{R}{l}}\right)^{\frac{1}{2}}}&\ l\le |x|\le R\\
0&\ |x|>R.
                                    \end{cases}
\]
Then it satisfies $m_{l,R}\in H^1_0(B)$ and $\|m_{l,R}\|=1$. In addition let us define a cut off function,
\[
\phi_{l,R}(x)=1-\frac{m_{l,R}(x)}{\sqrt{2\pi}^{-1}\left(\log{\frac{R}{l}}\right)^{\frac{1}{2}}}\in H^1(B)
\]
Then we have $0\le \phi_{l,R}\le 1$, $\phi_{l,R}=0$ on $B_{l}$ and $\phi_{l,R}=1$  on $B\setminus B_{R}$. 
For $0<l_1=l_{1,\e}<R_1=R_{1,\e}<p_1=p_{1,\e}<l_2=l_{2,\e}<R_2=R_{2,\e}<p_2=p_{2,\e}<\cdots<l_k=l_{k,\e}<R_k=R_{k,\e}<1$, we set 
\[
\begin{cases}
w_{1,\e}:=m_{l_1,R_1},\\w_{i,\e}:=(-1)^{i-1}\phi_{R_{i-1},p_{i-1}} m_{l_i,R_i}\text{ for }k=2,\cdots,k,\text{ and }\\w_{k+1,\e}:=(-1)^{k}\phi_{R_k,1} u_0,
\end{cases}
\]
where $u_0$ is the least energy solution of \eqref{i9} obtained in \cite{A1} and thus, it satisfies  
\[
I_0(u_0)=\inf_{u\in \mathcal{N}_0}I_0(u)\in(0,2\pi).
\]
 We choose $l_1,R_1,p_1,\cdots,l_k,R_k$ so that $R_{k}\to0$ and
\begin{equation}
\begin{cases}
\frac{\log{\frac{1}{R_i}}}{\log{\frac{1}{l_i}}}\to 0\ (i=1,2,\cdots,k),\\
\frac{\log{\frac{R_{i}}{l_{i}}}}{\log{\frac{p_{i-1}}{R_{i-1}}}}\to 0,\ \ \frac{p_{i-1}}{l_i}\to0\ (i=2,\cdots,k),
\end{cases}\label{eq:parameters}
\end{equation}
as $\e\to0$. For example, take any $R_k>0$ such that $R_k\to0$ as $\e\to0$ and then,  choose $l_k=e^{-1/R_k}$, $p_{k-1}=l_k^2$, and $R_{k-1}=p_{k-1}e^{-1/l_k}$. Similarly, set $l_{k-1}=e^{-1/R_{k-1}}$, $p_{k-2}=l_{k-1}^2$, $R_{k-2}=p_{k-2}e^{-1/l_{k-1}}$ and so on. 
 We note that, for every $i=1,2,\cdots,k+1$ and $\e\in (0,1)$, there exists a constant $t_{i,\e}>0$ such that $t_{i,\e}w_{i,\e}\in \mathcal{N}_{\e}$. (See Step 2 in the proof of Lemma 3.4 in \cite{A1}.) We define a test function 
\[
w_{\e}(x):=\sum_{i=1}^{k+1}t_{i,{\e}}w_{i,\e}.
\]
Then we have $w_\e\in \mathcal{N}_{k,\e}$. We obtain the following.
\begin{lemma}\label{lem:energyup}
We get 
\[
\limsup_{\e\to 0}I_{\e}(u_{\e})\le 2\pi k+I_0(u_0).
\] 
\end{lemma}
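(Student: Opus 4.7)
The plan is to use the test function $w_\e \in \mathcal{N}_{k,\e}$ constructed just above the statement. Since $u_\e$ minimizes $I_\e$ on $\mathcal{N}_{k,\e}$, we have
\[
I_\e(u_\e)\le I_\e(w_\e)=\sum_{i=1}^{k+1}I_\e(t_{i,\e}w_{i,\e}),
\]
where the sum decomposes because $\phi_{R_{i-1},p_{i-1}}$ forces the supports of the $w_{i,\e}$'s to be pairwise disjoint (contained in $B_{R_1}$, the annuli $B(R_{i-1},R_i)$ for $2\le i\le k$, and $B(R_k,1)$). It then suffices to prove $I_\e(t_{i,\e}w_{i,\e})\to 2\pi$ for $1\le i\le k$ and $I_\e(t_{k+1,\e}w_{k+1,\e})\to I_0(u_0)$.

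For the outer piece, I would check that $\phi_{R_k,1}u_0\to u_0$ in $H_0^1(B)$ as $R_k\to 0$, using that $u_0\in H_0^1(B)$ and the cutoff is lost in a shrinking set. Combined with $|t|^{1+\e}\to |t|$ pointwise and dominated convergence on the bounded function $u_0$, the Nehari identity yielding $t_{k+1,\e}$ passes to the limit; the uniqueness of the Nehari intersection along the ray through $u_0$ forces $t_{k+1,\e}\to 1$. Hence $I_\e(t_{k+1,\e}w_{k+1,\e})\to I_0(u_0)$.

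For the Moser pieces ($1\le i\le k$), I would first compute $\|w_{i,\e}\|^2=1+o(1)$: $\|m_{l_i,R_i}\|^2=1$ is classical, and the cross term from the cutoff is exactly $\log(R_i/l_i)/\log(p_{i-1}/R_{i-1})\to 0$ by the choice \eqref{eq:parameters}. Next I would use that on $\mathcal{N}_\e$ the map $t\mapsto I_\e(tw_{i,\e})$ is unimodal with maximum at $t_{i,\e}$, so
\[
I_\e(t_{i,\e}w_{i,\e})=\max_{t>0}\Bigl[\tfrac{t^2}{2}\|w_{i,\e}\|^2-\int_B F_\e(tw_{i,\e})\Bigr]\le\max_{t>0}\Bigl[\tfrac{t^2}{2}(1+o(1))-\tfrac{\lambda}{2}\int_B(e^{(tw_{i,\e})^2}-1)\Bigr],
\]
using $F_\e(s)\ge \tfrac{\lambda}{2}(e^{s^2}-1)$. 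Restricting the last integral to $B_{l_i}$, where $w_{i,\e}\equiv (2\pi)^{-1/2}(\log(R_i/l_i))^{1/2}$, a direct optimization in $\tau=t^2$ yields the critical value $\tau^\ast=4\pi+o(1)$ and the maximum $\tau^\ast/2-\pi/\log(R_i/l_i)+o(1)\to 2\pi$, since $\log(R_i/l_i)\to\infty$ by \eqref{eq:parameters}.

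The main obstacle is the last point: the sharp balance that forces $t_{i,\e}^2\to 4\pi$ and produces exactly $2\pi$ in the limit is delicate, because one must verify that the tail contribution of the Moser function on the annulus $l_i<|x|<R_i$, together with the correction $|tw_{i,\e}|^{1+\e}$ in the exponent, does not spoil the upper bound. This is handled by showing that $e^{|tw_{i,\e}|^{1+\e}}=1+o(1)$ on the region that matters (since $tw_{i,\e}$ stays of order $(\log(R_i/l_i))^{1/2}$ and $\e\to 0$ makes the exponent subdominant relative to $(tw_{i,\e})^2$, at least for the leading-order calculation), and that the tail $\int_{B(l_i,R_i)}(\cdots)$ is negligible compared with the peak contribution from $B_{l_i}$. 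Summing the four limits produces the claimed bound $2\pi k+I_0(u_0)$.
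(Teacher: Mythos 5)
Your overall strategy is the same as the paper's: plug in the test function $w_\e\in\mathcal{N}_{k,\e}$, use minimality, and estimate the $k+1$ pieces separately (the support decomposition you describe is exactly what makes $I_\e(w_\e)$ split). Where you genuinely diverge is in the treatment of the Moser pieces. The paper first proves $\limsup_{\e\to0}t_{i,\e}^2\le 4\pi$ by contradiction, testing the Nehari identity on $B_{l_i}$ where $m_{l_i,R_i}$ is constant, and then invokes Lemma~\ref{lem:F} to show $\int_B F_\e(t_{i,\e}w_{i,\e})\,dx\to0$, so that $I_\e(t_{i,\e}w_{i,\e})\le t_{i,\e}^2/2+o(1)\le 2\pi+o(1)$. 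You instead bound $I_\e(t_{i,\e}w_{i,\e})$ by $\max_{t>0}I_\e(tw_{i,\e})$ and optimize an explicit one-dimensional concave function obtained by dropping everything outside $B_{l_i}$ and using $F_\e(s)\ge\tfrac{\lambda}{2}(e^{s^2}-1)$. This is legitimate: both discarded contributions (the annular tail and the $|t|^{1+\e}$ correction) only \emph{decrease} the fiber functional, so your worry in the last paragraph that they might ``spoil the upper bound'' is unfounded — they automatically work in your favour, and no estimate of the form $e^{|tw_{i,\e}|^{1+\e}}=1+o(1)$ is needed. Your computation of $\tau^\ast=4\pi+o(1)$ and of the maximal value $2\pi+o(1)$ is correct under \eqref{eq:parameters} (since $\log(1/R_i)=o(\log(1/l_i))$ gives $\log(1/l_i)/\log(R_i/l_i)\to1$), and the identification of $t_{i,\e}$ with the fiber maximum is justified because $\lambda<\lambda_1$ makes $t\mapsto I_\e(tw_{i,\e})$ have a unique critical point which is its global maximum (in any case the inequality $I_\e(t_{i,\e}w_{i,\e})\le\max_{t>0}I_\e(tw_{i,\e})$ is all you use). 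So for $i=1,\dots,k$ your argument is a valid, somewhat more direct, alternative.

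The one genuine gap is in the outer piece. You write that ``the Nehari identity yielding $t_{k+1,\e}$ passes to the limit'' and that uniqueness of the Nehari intersection forces $t_{k+1,\e}\to1$; but passing to the limit presupposes that $(t_{k+1,\e})$ is bounded, and this is not automatic. The paper devotes a separate argument to exactly this point: assuming $t_{k+1,\e_n}\to\infty$, it normalizes $w_{\e_n}/\|w_{\e_n}\|\to u_0/\|u_0\|\neq0$ and derives a contradiction from the Nehari identity via Fatou's lemma, using that $f_\e(s)/s\to\infty$ as $|s|\to\infty$. You need this step (or an equivalent one) before you can extract a convergent subsequence $t_{k+1,\e}\to t_0$. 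Once boundedness is in hand, note also that the limit $t_0$ could a priori be $0$ as well as $1$ (the paper allows both); this does not hurt you, since in either case $\lim I_\e(t_{k+1,\e}w_{k+1,\e})\le I_0(u_0)$, but asserting $t_{k+1,\e}\to1$ outright is slightly more than what you have proved and more than what the lemma requires. Your verification that $\phi_{R_k,1}u_0\to u_0$ in $H^1_0(B)$ is fine, using $\|\nabla\phi_{R_k,1}\|_{L^2}^2=2\pi/\log(1/R_k)\to0$ and the boundedness of $u_0$.
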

\begin{proof} 
First observe that since $w_{\e}\in \mathcal{N}_{k,\e}$, we have
\[
I(u_{\e})\le I_{\e}(w_{\e})=\sum_{i=1}^{k+1}I_{\e}(t_{i,\e}w_{i,\e}).
\]
Then it suffices to show, 
\begin{enumerate}
\item[(I)] $\limsup_{\e\to 0}I_{\e}(t_{1,\e}w_{1,\e})\le2\pi$,
\item[(II)] $\limsup_{\e\to 0}I_{\e}(t_{i,\e}w_{i,\e})\le2\pi$, \text{ for }$i=2,\cdots,k$, and,
\item[(III)] $\limsup_{\e\to 0}I_{\e}(t_{k+1,\e}w_{k+1,\e})\le I_0(u_0)$.
\end{enumerate}
(I) We claim 
\begin{equation}
\limsup_{\e\to 0}t_{1,\e}^2\le 4\pi.\label{eq;energy4}
\end{equation}
If not, there exist a sequence $(\e_n)$ and a constant $\delta>0$ such that $\e_n\to 0$ as $n\to \infty$ and $t_{1,\e_n}^2\ge 4\pi(1+\delta)$ for all $n$. Set $t_n:=t_{1,\e_n}$, $w_{n}:=w_{1,\e_n}$, $l_n:=l_{1,\e_n}$ and $R_n:=R_{1,\e_n}$ for simplicity. Since $t_nw_n\in \mathcal{N}_{\e_n}$, we get
\[
t_n^2\|w_n\|^2=\lambda \int_{B}(t_nw_n)^2e^{|t_nw_n|^{2}+|t_nw_n|^{1+\e_n}}dx.
\]
Then we have
\[
\begin{split}
t_n^2&\ge \lambda\int_{B_{l_n}}(t_nw_n)^2e^{|t_nw_n|^{2}+|t_nw_n|^{1+\e_n}}dx\\
         &\ge\frac{\lambda}{2}t_n^2 l_n^2\log{\frac{R_n}{l_n}}e^{\frac{t_n^{2}}{2\pi}\log{\frac{R_n}{l_n}}}\\
&=\frac{1}{2}t_n^2\log{\frac{R_n}{l_n}}\exp{\left\{\frac{t_n^{2}}{2\pi}\left(\log{\frac{1}{l_n}}-\log{\frac{1}{R_n}}\right)-2\log{\frac{1}{l_n}}\right\}}.
\end{split}
\]
Here, \eqref{eq:parameters} implies that
\[
\log{\frac{1}{R_n}}=o\left(\log{\frac{1}{l_n}}\right).
\]
It follows that
\[
\log{\frac{R_n}{l_n}}\to \infty \ (n\to \infty).
\] 
As a consequence, we find a constant $\delta'>0$ such that
\[
\begin{split}
2\ge \exp{\left\{\delta'\log{\frac{1}{l_n}}\right\}}
\end{split}
\]
for large $n$. Taking $n\to \infty$, we have a contradiction. Now, since $t_{1,\e}w_{1,\e}\in \mathcal{N}_\e$,  $\|w_{1,\e}\|=1$ and  $\limsup_{\e\to0}t_{1,\e}^2\le4\pi$, we get
\[
\left|\int_{B}f_{\e}(t_{1,\e}w_{1,\e})t_{1,\e}w_{1,\e}dx\right|\le C
\]
for some constant $C>0$ uniformly for $\e>0$. Furthermore, note $t_{1,\e}w_{1,\e}\to 0$ a.e. on $B$. Then by Lemma \ref{lem:F} in Appendix \ref{sec:basic}, we find  
\[
\lim_{\e\to0} \int_{B}F_{\e}(t_{1,\e}w_{1,\e})dx= \int_BF_0(0)dx=0. 
\]
As a consequence, we get
\[
\limsup_{\e\to0} I_{\e}(t_{1,\e}w_{1,\e})=\limsup_{\e\to0}\frac{t_{1,\e}^2}{2}\le2\pi.
\]
This finishes the proof of (I).\\ \ \\
(II) Fix $i=2,3,\cdots,k$. We first claim $\lim_{\e\to 0}\int_{B}|\nabla w_{i,\e}|^2dx= 1$. In fact, we get
\[
\begin{split}
\int_{B}|\nabla w_{i,\e}|^2dx&=\int_{B}|\nabla \phi_{R_{i-1},p_{i-1}}|^2m_{l_i,R_i}^2dx+2\int_{B}\phi_{R_{i-1},p_{i-1}}m_{l_i,R_i}\nabla \phi_{R_{i-1},p_{i-1}}\nabla m_{l_i,R_i}dx\\&+\int_{B}|\nabla m_{l_i,R_i}|^2\phi_{R_{i-1},p_{i-1}}^2dx\\
& =I_1+I_2+I_3.
\end{split}
\]
It follows from \eqref{eq:parameters} that 
\[
I_1=\int_{B(R_{i-1},p_{i-1})}|\nabla \phi_{R_{i-1},p_{i-1}}|^2m_{l_i,R_i}^2dx= \frac{\log{\frac{R_i}{l_i}}}{\log{\frac{p_{i-1}}{R_{i-1}}}}\to 0
\]
as $\e\to0$. Since $\phi_{R_{i-1},p_{i-1}}m_{l_i,R_i}\nabla \phi_{R_{i-1},p_{i-1}}\nabla m_{l_i,R_i}=0$ on $B$, we get $I_2=0$. Furthermore, as $\phi_{R_{i-1},p_{i-1}}=1$ on $B(l_i,R_i)$ and $\nabla m_{l_i,R_i}=0$ on $B_{l_i}$, we clearly have
\[
I_3=\int_{B}|\nabla m_{l_i,R_i}|^2dx=1.
\]
This shows the claim. Now we shall show $\limsup_{\e\to 0}t_{i,\e}^2\le 4\pi$. If not, there exists a constant $\delta>0$ such that $t_{i,\e}^2\ge 4\pi (1+\delta)$ for all small $\e>0$ by extracting a sequence if necessary. Then noting $t_{i,\e}w_{i,\e}\in \mathcal{N}_{\e}$ and \eqref{eq:parameters}, we get
\[
\begin{split}
1+o(1) &=\lambda \int_B (\phi_{R_{i-1},p_{i-1}}m_{l_i,R_i})^2\exp{\left\{\left(t_{i,\e}\phi_{R_{i-1},p_{i-1}}m_{l_i,R_i}\right)^2+\left|t_{i,\e}\phi_{R_{i-1},p_{i-1}}m_{l_i,R_i}\right|^{1+\e}\right\}}dx\\
&\ge \lambda \int_{B(p_{i-1},l_i)} m_{l_i,R_i}^2\exp{\left\{\left(t_{i,\e}m_{l_i,R_i}\right)^{2}\right\}}dx\\
&= \frac{\lambda}{2} \log{\frac{R_i}{l_i}}\exp{\left\{\frac{t_{i,\e}^{2}}{2\pi}\left(\log{\frac{1}{l_i}}-\log{\frac{1}{R_i}}\right)-2\log{\frac{1}{l_i}}-2\log{\frac{1}{1-(p_{i-1}/l_i)^2}}\right\}}\\
&\ge C \exp{\left(\delta'\log{\frac{1}{l_i}}\right)}, 
\end{split}
\]
for some constants $C,\delta'>0$ if $\e$ is small enough. Taking $\e\to0$, we get a contradiction. Then, analogously with the conclusion for (I), we obtain
\[
\limsup_{\e\to 0}I_{\e}(t_{i,\e}w_{i,\e})=\limsup_{\e\to 0}\frac{\|t_{i,\e}w_{i,\e}\|^2}{2}\le 2\pi.
\]
This proves (II).\\ \ \\
(III) We claim that $t_{k+1,\e}$ is bounded. To see this, we follow the argument on p493--494 in \cite{AY1}. We assume on the contrary, for a sequence $(\e_n)$, we have $\e_n\to0$ and $t_{k+1,\e_n}\to \infty$ as $n\to \infty$. Then we let 
\[
v_{n}:=\frac{t_{k+1,\e_n}w_{k+1,\e_n}}{\|t_{k+1,\e_n}w_{k+1,\e_n}\|}=\frac{w_{k+1,\e_n}}{\|w_{k+1,\e_n}\|}.
\]
Then using \eqref{eq:parameters}, we get $v_n\to v_0=u_0/\|u_0\|\not=0$ in $H^1_0(B)$. Furthermore, noting $t_{i,\e_n}$ is bounded for all $i=1,2,\cdots,k$ as proved in (I) and (II), we obtain
\[
\|w_{\e_n}\|^2=\sum_{i=1}^{k}t_{i,\e_n}^2+t_{k+1,\e_n}^2\|w_{k+1,\e_n}\|^2=t_{k+1,\e_n}^2\|w_{k+1,\e_n}\|^2(1+\eta_n),
\]
for a sequence $(\eta_n)\subset \R^+$ with $\eta_n\to 0$ as $n\to \infty$. Therefore, we get
\[
\frac{w_{\e_n}}{\|w_{\e_n}\|}=\frac{1}{(1+\eta_n)^{\frac12}}\left(v_n+\sum_{i=1}^k\frac{t_{i,\e_n}}{t_{k+1,\e_n}\|w_{k+1,\e_n}\|}w_{i,\e_n}\right)\to v_0\not=0\text{ in }H^1_0(B).
\]
Finally using $w_{\e_n}\in \mathcal{N}_{\e_n}$ and the Fatou lemma, we have
\[
\begin{split}
1&=\liminf_{n\to \infty}\frac{1}{\|w_{\e_n}\|^2}\int_Bf_{\e_n}(w_n)w_ndx\\&\ge\int_B\liminf_{n\to \infty}\frac{f_{\e_n}(w_{\e_n})}{w_{\e_n}}\left(\frac{w_{\e_n}}{\|w_{\e_n}\|}\right)^2dx\\
&=\infty,
\end{split}
\]
a contradiction. This proves the claim. Finally let us end the proof. We suppose the conclusion of (III) does not hold on the contrary. Then, we have a sequence $(\e_n)$ and a constant $\delta> 0$ such that $\e_n\to0$ as $n\to \infty$ and $I_{\e_n}(t_{k+1,\e_n}w_{k+1,\e_n})\ge I_0(u_0)+\delta$ for all $n$. On the other hand, as $t_{k+1,\e_n}$ is bounded, there exists a constant $t_0\ge0$ such that $t_{k+1,\e_n}\to t_0$ as $n\to \infty$ up to subsequences. This implies $t_{k+1,\e_n}w_{k+1,\e_n}\to t_0 u_0$ in $H^1_0(\Omega)$ as $n\to \infty$ and then, we get $t_0u_0\in\mathcal{N}_0$. It follows that $t_0=0$ or $1$. (See Step 2 in the proof of Lemma 3.4 in \cite{A1}.) Consequently, we deduce
\[
\lim_{n\to \infty}I_{\e_n}(t_{k+1,\e_n}w_{k+1,\e_n})\le I_0(u_0),
\] 
which implies a contradiction. This completes (III).
\end{proof}
\begin{lemma}\label{lem:bdd} There exist constants $0<K<K'$ such that
\[
K\le \|u\|^2\le K'
\]
for all $u\in \mathcal{N}_\e$ and small $\e>0$. 
\end{lemma}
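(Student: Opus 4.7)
The plan is to prove the two inequalities $K\le\|u\|^2$ and $\|u\|^2\le K'$ separately, both by contradiction.

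\textbf{Lower bound.} I would suppose there exist $\e_n\to 0$ and $u_n\in\mathcal{N}_{\e_n}$ with $\|u_n\|^2\to 0$. Using the elementary inequality $e^t\le 1+te^t$ for $t\ge 0$ with $t=u_n^2+|u_n|^{1+\e_n}$, the Nehari identity becomes
\[
\|u_n\|^2 \le \lambda\|u_n\|_{L^2}^2 + \lambda\int_B \bigl(u_n^4+|u_n|^{3+\e_n}\bigr) e^{u_n^2+|u_n|^{1+\e_n}}\,dx.
\]
The first term is bounded by $(\lambda/\lambda_1)\|u_n\|^2$ via Poincar\'e. For the second, the trivial bound $|u_n|^{1+\e_n}\le 1+u_n^2$ gives $\int_B e^{2(u_n^2+|u_n|^{1+\e_n})}\,dx\le e^2\int_B e^{4u_n^2}\,dx$, which by the sharp Moser--Trudinger inequality is bounded as soon as $\|u_n\|^2\le \pi$ (eventually true). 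Combined with H\"older and the 2D Sobolev embedding $H^1_0(B)\hookrightarrow L^p(B)$ for every $p<\infty$, the second term is controlled by $C\bigl(\|u_n\|^4+\|u_n\|^{3+\e_n}\bigr)$. Dividing by $\|u_n\|^2$ and letting $n\to\infty$ yields
\[
1-\frac{\lambda}{\lambda_1}\le C\bigl(\|u_n\|^2+\|u_n\|^{1+\e_n}\bigr)\longrightarrow 0,
\]
contradicting $\lambda<\lambda_1$.

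\textbf{Upper bound.} I would suppose $\|u_n\|^2\to\infty$ for some $u_n\in\mathcal{N}_{\e_n}$. Setting $v_n:=u_n/\|u_n\|$ so that $\|v_n\|=1$, the Nehari identity reads
\[
1=\lambda\int_B v_n^2\exp\bigl(\|u_n\|^2 v_n^2+\|u_n\|^{1+\e_n}|v_n|^{1+\e_n}\bigr)\,dx.
\]
Passing to a subsequence, $v_n\rightharpoonup v_0$ weakly in $H^1_{\mathrm{r},0}(B)$, $v_n\to v_0$ a.e.\ and strongly in every $L^p(B)$. If $v_0\not\equiv 0$, then on $\{v_0\ne 0\}$ the exponent $\|u_n\|^2v_n^2\to+\infty$ and Fatou's lemma forces the right-hand side to diverge, contradicting the value $1/\lambda$. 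If instead $v_0\equiv 0$, the radial estimate $|v_n(r)-v_n(s)|\le (2\pi)^{-1/2}\sqrt{\log(r/s)}$ (equicontinuity on $[\delta,1]$) and the pointwise convergence yield $v_n\to 0$ uniformly on $B\setminus B_\delta$ for every $\delta>0$, so the right-hand side concentrates in $B_\delta$. A level-set splitting at a carefully chosen threshold $\eta_n\to 0$, combined with the sharp Moser--Trudinger inequality $\int_B e^{4\pi v_n^2}\,dx\le C$ available since $\|v_n\|=1$, would yield the desired contradiction.

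The main obstacle is the concentration scenario $v_0\equiv 0$ in the upper bound: the factor $\|u_n\|^2$ in the exponent eventually exceeds any Moser--Trudinger admissible constant, so one cannot directly use $\int e^{\alpha v_n^2}\le C$ with $\alpha<4\pi$, and must balance carefully the radial concentration of $v_n$ at the origin against the growth of $\|u_n\|^2$.
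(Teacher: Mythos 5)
Your lower bound argument is correct and complete; it is in substance the same computation that the paper delegates to Lemma \ref{lem:energylb0} (Step 1 of Lemma 3.4 in \cite{A1}), the paper's route being to deduce $\|u\|^2\ge 2I_\e(u)\ge 2\inf_{\mathcal{N}_\e}I_\e>0$ from the positivity of the Nehari energy level, since $F_\e\ge0$.

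The upper bound, however, has a genuine gap, and you have correctly identified where it is: the concentration case $v_0\equiv 0$ cannot be ``balanced away'', because the Nehari constraint alone does not bound $\|u\|$. Indeed, if $u_1,\dots,u_N\in\mathcal{N}_\e$ are radial with pairwise disjoint annular supports (each nonzero radial function supported in an annulus admits a Nehari multiple since $\la<\la_1$), then $u=\sum_j u_j$ satisfies $\|u\|^2=\sum_j\|u_j\|^2=\sum_j\int_Bf_\e(u_j)u_j\,dx=\int_Bf_\e(u)u\,dx$, so $u\in\mathcal{N}_\e$, while $\|u\|^2\ge NK$ by your own lower bound; such $u$ have $u/\|u\|\rightharpoonup0$ and realize exactly the scenario your sketch cannot exclude. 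The bound is therefore only provable (and only ever applied in the paper) for elements of $\mathcal{N}_\e$ carrying a uniform energy bound, which Lemma \ref{lem:energyup} provides for $u_\e$ and its nodal components; this is the content of Claim 1 on p.~404 of \cite{A1} that the paper cites. The missing step is then elementary and does not require any blow-up dichotomy: since $f_\e(t)t-2F_\e(t)\ge0$ for all $t$ and $2F_\e(t)/(f_\e(t)t)\le t^{-2}\to0$ as $|t|\to\infty$ uniformly in small $\e$, there is $T>0$ with $\tfrac12 f_\e(t)t-F_\e(t)\ge\tfrac14 f_\e(t)t$ for $|t|\ge T$, whence for $u\in\mathcal{N}_\e$,
\[
I_\e(u)=\int_B\Bigl(\tfrac12 f_\e(u)u-F_\e(u)\Bigr)dx\ge\tfrac14\int_{\{|u|\ge T\}}f_\e(u)u\,dx\ge\tfrac14\|u\|^2-C_T,
\]
so that $\|u\|^2\le 4I_\e(u)+4C_T$. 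You should either add the hypothesis $I_\e(u)\le C$ to the statement or replace your normalization-and-Fatou argument for the upper bound by this Ambrosetti--Rabinowitz computation.
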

\begin{proof} The lower bound is clearly confirmed by Lemma \ref{lem:energylb0} in Appendix \ref{sec:basic}. On the other hand, the upper bound is proved similarly to claim 1 on p404 in \cite{A1}. This finishes the proof. 
\end{proof}
%
%
%
%

%
%
Next we study the behavior of $r_{i,\e}$. To this end we recall the next lemma. 
\begin{lemma}[Radial lemma \cite{S}]\label{lem:rl}
Let $B^N\subset \R$ be a $N$-dimensional unit ball and $H_{\text{rad}}(B^N)$ be a subspace of $H^1(B^N)$ which consists of all the radial functions. Then, there exists a constant $c_N>0$ such that 
\[
|u(r)|\le c_N\|u\|/r^{\frac{N-1}{2}}\ (u\in H_{\text{rad}}(B^N)\text{ and }r\in (0,1)).
\]
In particular, for $N=2$ we have $|u(r)|\le c_2 \|u\|/\sqrt r$.
\end{lemma}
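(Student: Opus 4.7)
The plan is to reduce the bound to a one-dimensional inequality for the radial profile $v(r) := u(r)$ and integrate a weighted identity of Strauss type. By density of smooth radial functions in $H_{\text{rad}}(B^N)$ it suffices to prove the bound assuming $v\in C^1([0,1])$, and then pass to the limit.

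The key starting point is the identity
\[
\frac{d}{ds}\bigl[s^{N-1}v(s)^2\bigr] = (N-1)s^{N-2}v(s)^2 + 2s^{N-1}v(s)v'(s),
\]
which upon integration from $r\in(0,1)$ to $1$ yields
\[
r^{N-1}v(r)^2 \;=\; v(1)^2 \;-\; (N-1)\int_r^1 s^{N-2}v(s)^2\,ds \;-\; 2\int_r^1 s^{N-1}v(s)v'(s)\,ds.
\]
The first integral on the right is non-negative and may therefore be dropped when bounding $r^{N-1}v(r)^2$ from above. Applying the elementary inequality $2|vv'|\le v^2+(v')^2$ to the second integral and extending the range of integration to $(0,1)$, I obtain
\[
r^{N-1}v(r)^2 \;\le\; v(1)^2 + \int_0^1 s^{N-1}\bigl(v(s)^2+v'(s)^2\bigr)\,ds \;\le\; v(1)^2 + \frac{1}{|S^{N-1}|}\|u\|_{H^1(B^N)}^2.
\]

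To control the remaining boundary term $v(1)^2$, I would use that $u$ is radial, so its trace on $\partial B^N$ is the constant $v(1)$; the standard trace inequality $H^1(B^N)\hookrightarrow L^2(\partial B^N)$ then gives $v(1)^2\,|S^{N-1}| = \|u|_{\partial B^N}\|_{L^2(\partial B^N)}^2 \le C\|u\|_{H^1(B^N)}^2$. Combining, $r^{N-1}v(r)^2\le c_N^2\|u\|_{H^1(B^N)}^2$, which after taking square roots is the desired bound; the case $N=2$ is just the specialization with $(N-1)/2=1/2$.

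The main point, by comparison with the classical Strauss proof on $\R^N$, is the treatment of the boundary contribution $v(1)^2$: on $\R^N$ one integrates to $+\infty$ and uses that $r^{N-1}v(r)^2\to 0$ (which follows automatically from $u\in H^1(\R^N)$), but on the bounded ball no such decay is available and the trace theorem must be invoked. Apart from this the argument is a routine one-dimensional computation, the crucial observation being simply that the non-negative term $(N-1)\int_r^1 s^{N-2}v^2\,ds$ can be discarded when seeking an upper bound on $r^{N-1}v(r)^2$, so that the pesky weight $s^{N-2}$ never enters the estimate.
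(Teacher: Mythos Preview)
The paper does not supply its own proof of this lemma; it merely cites the result from Strauss~\cite{S}. Your argument is correct and is essentially the classical Strauss computation transplanted from $\R^N$ to the unit ball: the identity for $\frac{d}{ds}\bigl[s^{N-1}v(s)^2\bigr]$, the discarding of the non-negative term, and the absorption of the cross term via $2|vv'|\le v^2+(v')^2$ are exactly the steps in the original proof. Your observation that on the bounded ball one cannot send the upper limit to $+\infty$ and must instead control $v(1)^2$ through the trace inequality is the right adaptation, and that step is sound.

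One small comment on notation: the paper defines $\|u\|$ only as the Dirichlet seminorm $\bigl(\int_B|\nabla u|^2\bigr)^{1/2}$ for $u\in H^1_0(B)$, whereas the lemma is stated on $H^1(B^N)$. Your reading of $\|u\|$ as the full $H^1$-norm is the correct one here (and is forced, since nonzero constants would otherwise violate the inequality); in the paper's applications the functions lie in $H^1_0(B)$ anyway, so the distinction is immaterial there.
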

We deduce the following. 
\begin{lemma}\label{lem:kthzero} We see
\[
r_{i,\e}\to0\text{ as }\e\to0
\]
for all $i=1,2,\cdots,k$.
\end{lemma}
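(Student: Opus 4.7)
The plan is to reduce the statement to showing $r_{k,\e}\to 0$ (which implies $r_{i,\e}\to 0$ for all $i<k$ by monotonicity of the zeros) and then to argue by contradiction. Assume that, along a subsequence, $r_{k,\e}\to r^*\in(0,1)$. From Lemma \ref{lem:bdd} one has $\|u_\e\|^2=\sum_{i=1}^{k+1}\|u_{i,\e}\|^2\leq(k+1)K'$ uniformly in $\e$, so the Radial Lemma \ref{lem:rl} yields the pointwise bound $|u_\e(r)|\leq C/\sqrt r$ with $C$ independent of $\e$. In particular, on any annulus $B\setminus B_\rho$ with $0<\rho<r^*$, both $u_\e$ and $f_\e(u_\e)$ are uniformly bounded, so standard elliptic regularity and a diagonal argument produce, along a further subsequence, a radial $u^*\in C^2(B\setminus\{0\})$ with $u_\e\to u^*$ in $C^2_{\mathrm{loc}}(B\setminus\{0\})$, solving $-\Delta u^*=\lambda u^*e^{(u^*)^2+u^*}$ on $B\setminus\{0\}$ and satisfying $u^*(1)=0$.

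Next I would establish that $u^*$ is nontrivial and sign-changing. Since $u_{k+1,\e}$ is uniformly bounded on $B(r_{k,\e},1)$ and satisfies the Nehari identity $\|u_{k+1,\e}\|^2=\lambda\int u_{k+1,\e}^2 e^{u_{k+1,\e}^2+|u_{k+1,\e}|^{1+\e}}dx$, dominated convergence combined with Lemma \ref{lem:bdd} shows that $u_{k+1,\e}\to v^*:=u^*|_{B(r^*,1)}$ strongly in $H^1$ with $\|v^*\|^2\geq K>0$; hence $v^*\not\equiv 0$, $v^*(r^*)=0$, and $(-1)^k v^*>0$ on $B(r^*,1)$. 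If $r_{k-1,\e}\to r^{**}<r^*$, the alternating-sign structure of $u_\e$ together with the strong maximum principle gives $(-1)^{k-1}u^*>0$ on $(r^{**},r^*)$, so $u^*$ changes sign at $r^*$. Otherwise $r_{k-1,\e}\to r^*$, and Rolle's theorem applied to the consecutive zeros of $u_\e$ produces critical points $\xi_\e\in(r_{k-1,\e},r_{k,\e})$ with $\xi_\e\to r^*$; hence $(u^*)'(r^*)=0$, and together with $u^*(r^*)=0$ uniqueness for the radial ODE forces $v^*\equiv 0$, a contradiction. In either case $u^*$ is a radial sign-changing $C^2$-function on $B\setminus\{0\}$.

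The last step, which I expect to be the main obstacle, is to extend $u^*$ to a classical solution on the whole ball $B$ in order to invoke the non-existence Theorem \ref{tt1}$(ii)$. The pointwise bound $|u^*(r)|\leq C/\sqrt r$ combined with Moser-Trudinger integrability on each nodal region gives $f_0(u^*)\in L^1(B)$; this, together with the uniform $H^1$-bound $\|u^*\|^2\leq\liminf\|u_\e\|^2<\infty$, allows a standard two-dimensional removable-singularity argument (using a cut-off $\eta_\rho$ vanishing on $B_\rho$, passing to the weak formulation on $B\setminus B_\rho$, and sending $\rho\to 0$) to conclude that $u^*\in H^1_0(B)$ weakly solves $-\Delta u=\lambda u e^{u^2+u}$ on $B$; elliptic bootstrap then promotes this to $u^*\in C^2(\overline B)$. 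This yields a radial sign-changing solution of \eqref{i7} with $\beta=1$ for $0<\lambda<\lambda_{AY}$, contradicting Theorem \ref{tt1}$(ii)$; hence $r_{k,\e}\to 0$, as required.
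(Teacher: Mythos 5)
Your proof is correct and pivots on the same idea as the paper's: assume $r_{k,\e}$ stays away from $0$, show that the limit of $u_\e$ is a nontrivial radial solution of the limit problem that still changes sign, and contradict the Adimurthi--Yadava nonexistence result (Theorem \ref{tt1}$(ii)$), with Lemma \ref{lem:bdd} plus the radial lemma ruling out triviality of the outer nodal piece. The implementations differ, though. The paper stays entirely in the weak $H^1_0(B)$ framework: the weak limit $u=\sum_i u_i$ is automatically an $H^1_0$ solution on all of $B$ (via the $L^1$-convergence of the nonlinearity, Lemma \ref{lem:F}), the nonexistence of nodal solutions then forces $u_k=0$, and the mechanism ``a Nehari piece with vanishing $L^2$-norm has blowing-up sup, contradicting the radial lemma'' is iterated. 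You instead pass to classical $C^2_{\mathrm{loc}}(B\setminus\{0\})$ limits and use ODE uniqueness to show that the sign change at $r^*$ survives the limit (your Rolle dichotomy is a clean substitute for the paper's terse appeal to the maximum principle), but you then must pay for this with a removable-singularity argument at the origin before the nonexistence theorem applies --- a step the paper gets for free from the weak formulation. Both routes work; yours is more transparent about the limit object, the paper's is shorter.

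Two details to tidy. First, you should allow $r^*\in(0,1]$: the case $r^*=1$ is excluded at once because $|B(r_{k,\e},1)|\to 0$ while $|u_{k+1,\e}|$ is uniformly bounded there by the radial lemma, so the Nehari identity contradicts the lower bound $K$ of Lemma \ref{lem:bdd}. Second, $f_0(u^*)\in L^1(B)$ alone does not suffice for the final bootstrap to $C^2$ across the origin; use instead that $u^*\in H^1_0(B)$ (weak lower semicontinuity of the norm) gives $e^{\alpha (u^*)^2}\in L^1(B)$ for every $\alpha>0$, hence $f_0(u^*)\in L^p(B)$ for all $p<\infty$, after which standard elliptic regularity applies. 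Relatedly, in your case $(a)$ the upgrade from $(-1)^{k-1}u^*\ge0$ to strict positivity on $(r^{**},r^*)$ needs the same ODE-uniqueness observation you already use in case $(b)$, since otherwise $u^*$ could a priori vanish identically there.
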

\begin{proof}
By Lemma \ref{lem:bdd}, we may assume $u_{\e}$ is bounded in $H^1_0(B)$ and $u_{\e}\rightharpoonup u$ weakly in $H^1_0(B)$ as $\e\to0$ where $u$ is a radial solution $u$ to \eqref{i11} with $\e=0$. Moreover we recall that $u_{i,\e}=u_{\e}|_{B(r_{i-1,\e},r_{i,\e})}$  satisfies $(-1)^{i-1}u_{i,\e}\ge 0$ for all $i=1,2,\cdots,k+1$. Then, we can suppose there exists a function $u_i\in H^1_0(B)$ such that $u_{i,\e}\rightharpoonup u_{i}$ weakly in $H^1_0(B)$ and $(-1)^{i-1}u_i\ge 0$ for all $i=1,2,\cdots,k+1$ and further, $u=\sum_{i=1}^{k+1} u_i$. Now, let us show $r_{k,\e}\to 0$ which also implies $r_{i,\e}\to 0$ for all $i=1,2,\cdots,k-1$ as $\e\to0$. If not, we may suppose that there exists a constant $r_k\in(0,1]$ such that $r_{k,\e}\to r_k$ as $\e\to0$.  We then claim $u_{k+1}\not=0$. Indeed, if $u_{k+1}=0$, on the contrary, we have $\int_{B}u_{k+1,\e}^2dx\to0$ as $\e\to0$. It follows that $\|u_{k+1,\e}\|_{\infty}=\sup_{r\in (r_{k,\e},1)}u_{k+1,\e}(r)\to \infty$ as $\e\to0$. Otherwise, from Lemma \ref{lem:bdd}, we get
\[
\begin{split}
0<K&\le \|u_{k+1,\e}\|^2 =\lambda\int_{B}u_{k+1,\e}^2e^{u_{k+1,\e}^{2}+|u_{k+1,\e}|^{1+\e}}dx\\&\le \lambda e^{\|u_{k+1,\e}\|_{\infty}^2+\|u_{k+1,\e}\|_{\infty}^{1+\e}}\int_{B} u_{k+1,\e}^2dx\to 0
\end{split}
\]
as $\e\to0$, a contradiction. As a consequence, setting $\|u_{k+1,\e}\|_{\infty}=u_{k,\e}(r_{k,\e}^*)$ with a value $r_{k,\e}^*\in(r_{k,\e},1)$, we get from Lemma \ref{lem:rl} that
\[
\|u_{k,\e}\|\ge c_2^{-1}|u_{k,\e}(r^*_{k,\e})|{(r^*_{k,\e})}^{\frac{1}{2}}\ge c_2^{-1}|u_{k,\e}(r^*_{k,\e})|r_{k,\e}^{\frac{1}{2}}\to \infty
\] 
as $\e\to0$ since $r_k>0$, which contradicts Lemma \ref{lem:bdd}. This shows the claim. Especially we get $0\le r_1\le \cdots\le r_k\in(0,1)$. Now recalling that $u$ is a radial solution and $\lambda<\lambda_{\text{AY}}$ and then, noting $(-1)^k u_{k+1}\ge 0$ is nontrivial and $(-1)^{k-1}u_{k}\ge0$, we must have $u_{k}=0$. Then, the maximum principle yields $r_{k}=r_{k+1}$. Finally, repeating the argument above, we get $\sup_{r\in (r_{k-1,\e},r_{k,\e})}u_{k,\e}(r)\to \infty$ as $\e\to0$ and then Lemmas \ref{lem:bdd} and  \ref{lem:rl} lead us to the contradiction. This finishes the proof. 
\end{proof}
Finally, let us investigate the limit value of the energy $I_\e(u_\e)$ more precisely. 
\begin{lemma}\label{lem:energy3}
We get
\[
\lim_{\e\to0}I_\e(u_{i,\e})= 2\pi
\]
for all $i=1,2,\cdots,k$. Furthermore,  we obtain
\[
\limsup_{\e\to0} I_{\e}(u_{k+1,\e})=I_0(u_0).
\]
\end{lemma}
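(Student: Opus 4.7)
The plan is to decompose $I_\e(u_\e) = \sum_{i=1}^{k+1} I_\e(u_{i,\e})$, which is valid because the supports of the $u_{i,\e}$ are pairwise disjoint and $F_\e(0)=0$, and then combine the global upper bound $\limsup_{\e\to 0} I_\e(u_\e) \le 2\pi k + I_0(u_0)$ from Lemma \ref{lem:energyup} with matching piecewise lower bounds. Specifically, it will be enough to prove
\[
\liminf_{\e\to 0} I_\e(u_{i,\e}) \ge 2\pi \quad (i=1,\dots,k), \qquad \liminf_{\e\to 0} I_\e(u_{k+1,\e}) \ge I_0(u_0),
\]
since summing these inequalities and comparing with the upper bound sandwiches every term.

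For the inner pieces $i=1,\dots,k$, the decisive point is that $u_{i,\e}$ lives on the annulus $A_{i,\e}=B(r_{i-1,\e},r_{i,\e})$, which shrinks to $\{0\}$ by Lemma \ref{lem:kthzero}, while $\|u_{i,\e}\|^2\ge K>0$ by Lemma \ref{lem:bdd}. I would first argue $\liminf_{\e\to 0}\|u_{i,\e}\|^2\ge 4\pi$ by contradiction: if $\|u_{i,\e}\|^2\le 4\pi-\gamma$ along a subsequence, then the subcritical Moser--Trudinger inequality yields $\int_B e^{q u_{i,\e}^2}dx\le C$ for some $q>1$, and the elementary estimate $|u|^{1+\e}\le \delta u^2 + C_\delta$ (with $\delta$ chosen so that $(1+\delta)(4\pi-\gamma)<4\pi$) absorbs the slow correction to give $\int_B e^{u_{i,\e}^2+|u_{i,\e}|^{1+\e}}dx\le C$. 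Since $u_{i,\e}$ is $H^1_0(B)$-bounded and supported on the shrinking set $A_{i,\e}$, it converges weakly, and hence strongly in $L^p(B)$ for every finite $p$ by Rellich, to $0$; combining with H\"older's inequality yields $\int_B f_\e(u_{i,\e})u_{i,\e}dx\to 0$, contradicting the Nehari identity $\|u_{i,\e}\|^2 = \int_B f_\e(u_{i,\e})u_{i,\e}dx\ge K$. Once this lower bound is secured, I rewrite $I_\e(u_{i,\e}) = \int_B G_\e(u_{i,\e})\,dx$ with $G_\e(s):=\tfrac{1}{2}sf_\e(s)-F_\e(s)\ge 0$, and observe that the explicit antiderivative computation gives $F_\e(s) = \tfrac{\lambda}{2}e^{s^2+|s|^{1+\e}}(1+o(1)) = o(sf_\e(s))$ as $|s|\to\infty$ uniformly in small $\e$; a cutoff at a large threshold $M$, using that the small-$|u|$ region has measure going to zero, yields $\int_B F_\e(u_{i,\e})dx = o(1)$, so $I_\e(u_{i,\e}) = \tfrac{1}{2}\|u_{i,\e}\|^2 + o(1)\ge 2\pi + o(1)$.

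For the outer piece, Lemma \ref{lem:bdd} lets me extract a weak limit $u_{k+1,\e}\rightharpoonup u_\infty$ in $H^1_0(B)$. The argument from the proof of Lemma \ref{lem:kthzero} shows $u_\infty\not\equiv 0$ and $(-1)^k u_\infty\ge 0$; since $r_{k,\e}\to 0$, elliptic regularity on compact subsets of $B\setminus\{0\}$ combined with the removable-singularity property (guaranteed by $u_\infty\in H^1_0(B)$) identifies $u_\infty$ as a fixed-sign radial $H^1_0(B)$-solution of the $\e=0$ limit problem, i.e., $u_\infty = (-1)^k v$ for a positive radial solution $v$. The least-energy characterization of $u_0$ among positive radial solutions then gives $I_0(v)\ge I_0(u_0)$, and since $G_\e(u_{k+1,\e})\ge 0$ pointwise and $G_\e\to G_0$ pointwise, Fatou's lemma produces
\[
\liminf_{\e\to 0} I_\e(u_{k+1,\e}) = \liminf_{\e\to 0}\int_B G_\e(u_{k+1,\e})\,dx \ge \int_B G_0(u_\infty)\,dx = I_0(u_\infty)\ge I_0(u_0).
\]
The main obstacle is the Moser--Trudinger threshold argument for the inner pieces: absorbing the subcritical correction $|u|^{1+\e}$ into a factor $e^{\delta u^2}$ while preserving a strict subcritical margin $(1+\delta)\|u_{i,\e}\|^2<4\pi$ requires a careful quantitative coupling between the assumed gap $\gamma$ and the slack $\delta$, uniform as $\e\to 0$. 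A secondary subtlety is ensuring that no additional bubble splits off the outer piece, which would spoil the Fatou application; fortunately this is resolved a posteriori by the matching upper bound derived from Lemma \ref{lem:energyup}.
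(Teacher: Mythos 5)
Your overall sandwiching strategy (global upper bound from Lemma \ref{lem:energyup} plus matching termwise lower bounds) is exactly the paper's, but your implementation of the lower bounds is genuinely different. For the inner pieces the paper does not argue on $u_{i,\e}$ directly: it bounds $I_\e(u_{i,\e})$ from below by the least energy of the positive Dirichlet problem on the ball $B_{r_{i,\e}}$, rescales to \eqref{eq:e3-1}, and invokes the Palais--Smale compactness Lemma \ref{lem:cpt} below level $2\pi$ to force the limit to be harmonic, hence zero, contradicting the positive lower energy bound. Your route --- showing $\liminf\|u_{i,\e}\|^2\ge 4\pi$ by combining the subcritical Moser--Trudinger inequality, the absorption $|u|^{1+\e}\le\delta u^2+C_\delta$, the shrinking support, and the Nehari identity, and then killing $\int_B F_\e(u_{i,\e})\,dx$ (which is really Lemma \ref{lem:F} applied with a.e.\ limit $0$) --- is correct and more self-contained; the quantitative coupling of $\gamma$, $\delta$ and the H\"older exponent that you flag as the main obstacle does go through. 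Note also that the lemma only asserts $\limsup I_\e(u_{k+1,\e})=I_0(u_0)$; the paper obtains the $\ge$ half by comparing $u_{k+1,\e}$ with the least-energy \emph{positive} solution $u_{0,\e}$ of \eqref{i11} and again using Lemma \ref{lem:cpt}, deferring the full limit to Lemma \ref{lem:energy5}, whereas you aim directly for $\liminf I_\e(u_{k+1,\e})\ge I_0(u_0)$.

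It is in that last step that you have a genuine gap: the nontriviality of the weak limit $u_\infty$ of $u_{k+1,\e}$. You justify $u_\infty\not\equiv 0$ by ``the argument from the proof of Lemma \ref{lem:kthzero}'', but that argument works only under the contradiction hypothesis $r_{k,\e}\to r_k>0$: it applies the radial lemma at the maximum point $r_{k,\e}^*\ge r_{k,\e}$ and needs $r_{k,\e}$ bounded away from $0$ to turn $\|u_{k+1,\e}\|_{L^\infty}\to\infty$ into unboundedness of $\|u_{k+1,\e}\|$. In the actual regime $r_{k,\e}\to0$ the maximum point may drift to the origin and no contradiction results; a priori the outer piece could itself concentrate and vanish weakly. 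To rule this out you must first use what you already have --- the inner lower bounds together with Lemma \ref{lem:energyup} and the nonnegativity of each $I_\e(u_{j,\e})$ --- to deduce $\limsup_{\e\to0}I_\e(u_{k+1,\e})\le I_0(u_0)<2\pi$, then note that $u_\infty=0$ would force $\int_B F_\e(u_{k+1,\e})\,dx\to0$ (Lemma \ref{lem:F}) and hence $\limsup\|u_{k+1,\e}\|^2<4\pi$, after which the same subcritical Moser--Trudinger/H\"older argument you used for the inner pieces contradicts $\|u_{k+1,\e}\|^2\ge K$. This is precisely the mechanism the paper deploys later in Proposition \ref{prop:outer}; without it (or the paper's comparison with $u_{0,\e}$), your Fatou inequality only yields $\liminf I_\e(u_{k+1,\e})\ge 0$. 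The fix is available from your own ingredients, but it requires reordering the argument, so as written the outer-piece step does not stand.
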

\begin{proof} Choose $i=1,2,\cdots,k$. We first claim
\begin{equation}
\liminf_{\e\to0}I_\e(u_{i,\e})\ge 2\pi.\label{eq:claim1}
\end{equation}
Indeed, let $\tilde{u}_{0,\e}\in H^1_0(B)$ be a positive solution of \eqref{i11} with $B$ replaced by $B_{r_{i,\e}}$ which satisfies
\[
I_{\e}(\tilde{u}_{0,\e})=\inf\left\{I_{\e}(u)\ \Big|\ u\in H^1_0(B_{r_{i,\e}}),\ \int_{B_{r_{i,\e}}}|\nabla u|^2dx=\int_{B_{r_{i,\e}}}f_\e(u)udx.\right\}.
\]
The existence of $\tilde{u}_{0,\e}$ is ensured by \cite{A1}. Then we have $I_{\e}(u_{i,\e})\ge I_{\e}(\tilde{u}_{0,\e})$. Hence it suffices to show  $\liminf_{\e\to0}I_{\e}(\tilde{u}_{0,\e})\ge 2\pi$. Now we assume, on the contrary, $\liminf_{\e\to0}I_{\e}(\tilde{u}_{0,\e})< 2\pi$. Set $v_{i,\e}(x)=\tilde{u}_{0,\e}(r_{i,\e}x)$. Then $v=v_{i,\e}$ satisfies
\begin{equation}
\begin{cases}
-\Delta v=\lambda r_{i,\e}^2 ve^{v^{2}+v^{1+\e}},\ v>0\ \text{ in }B,\\
v=0\text{ on }\partial B.
\end{cases}\label{eq:e3-1}
\end{equation}
We define the energy associated to \eqref{eq:e3-1}.
\[
J_{\e}(v)=\int_{B}|\nabla v|^2dx- r_{i,\e}^2\int_{B}F_{\e}(v)dx\ (v\in H^1_0(B)).
\]
Then we have $I_{\e}(\tilde{u}_{0,\e})=J_{\e}(v_{i,\e})$ and thus, $\liminf_{\e\to0}J_{\e}(v_{i,\e})< 2\pi$. In particular, we have a sequence $(\e_n)$ such that $\e_n\to0$ as $n\to \infty$ and $c:=\lim_{n\to \infty}J_{\e_n}(v_{i,\e_n})<2\pi$. Notice that Lemma \ref{lem:bdd} ensures $c>0$. Then, noting $J_{\e_n}'(v_{i,\e_n})=0$ and Lemma \ref{lem:cpt} in Appendix \ref{sec:basic}, we can find a function $v_0\in H^1_0(\Omega)$ such that $v_{i,\e_n}\to v_0$ in $H^1_0(\Omega)$ as $n\to \infty$ up to subsequences. Lastly, using \eqref{eq:e3-1}, we get 
\[
\begin{cases}
-\Delta v_0=0,\ v_0\ge0\ \text{ in }B,\\
v_0=0\text{ on }\partial B.
\end{cases}
\]
Then, the maximum principle shows $v_0=0$. But this contradicts $c>0$. Next let us show
\begin{equation}
\limsup_{\e\to0}I_\e(u_{i,\e})\le 2\pi,\text{\ \ and\ \ }\limsup_{\e\to0} I_{\e}(u_{k+1,\e})=I_0(u_0).\label{eq:claim2}
\end{equation}
In fact, we get by Lemma \ref{lem:energyup} and \eqref{eq:claim1} that
\[
2\pi k +I_0(u_0)\ge \limsup_{\e\to0}I_{\e}(u_{\e})\ge 2\pi k+\limsup_{\e\to 0}I_{\e}(u_{k+1,\e})
\]
which implies $I_0(u_0)\ge \limsup_{\e\to0}I_{\e}(u_{k+1,\e})$. Furthermore, let $u_{0,\e}$ be the least energy solution of \eqref{i11} obtained by \cite{A1}. It follows that $I_0(u_0)\ge \limsup_{\e\to0} I_{\e}(u_{k+1,\e})\ge \limsup_{\e\to0}I_{\e}(u_{0,\e})$. We claim $\limsup_{\e\to0}I_{\e}(u_{0,\e})\ge I_0(u_0)$. If not, we have a sequence $(\e_n)$  such that $\e_n\to 0$ as $n\to \infty$ and $\lim_{n\to \infty}I_{\e_n}(u_{0,\e})<I_0(u_0)$. Note $I_0(u_0)\in(0,2\pi)$. Then from Lemma \ref{lem:cpt}, we deduce, by subtracting a subsequence if necessary, $u_{0,\e_n}\to \tilde{u}_0$ in $H^1_0(B)$ as $n\to \infty$ and further, $\tilde{u}_0$ is a nontrivial solution of \eqref{i9} with $I_0(\tilde{u}_0)\in(0,I_0(u_0))$. But as $\tilde{u}_{0}\in\mathcal{N}_{0}$, we obtain a contradiction by the definition of $u_0$. This proves the claim. Now again arguing as the beginning, we get
\[
2\pi k +I_0(u_0)\ge \limsup_{\e\to 0}I_{\e}(u_{\e})\ge 2\pi(k-1)+\limsup_{\e\to0}I_{\e}(u_{i,\e})+I_0(u_0).
\]
This completes \eqref{eq:claim2}. As a consequence, \eqref{eq:claim1} and \eqref{eq:claim2} finish the proof.
\end{proof}
\begin{lemma}\label{lem:energy5}
We have
\[
\lim_{\e\to0}I_\e(u_{k+1,\e})= I_0(u_0).
\]
\end{lemma}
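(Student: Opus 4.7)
The plan is to combine Lemma \ref{lem:energy3}, which already gives $\limsup_{\e\to 0} I_\e(u_{k+1,\e}) = I_0(u_0)$, with a matching $\liminf$ bound obtained via a compactness/minimality argument for the outer-annulus piece. Since $I_0(u_0) < 2\pi$, we are in the subcritical energy regime for the Moser--Trudinger functional, which is exactly where strong $H^1_0$-compactness is available through Lemma \ref{lem:cpt} in the appendix.

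First I would argue by contradiction (equivalently, along a subsequence realizing the $\liminf$). Pick any sequence $\e_n\to 0$ with $L:=\lim_n I_{\e_n}(u_{k+1,\e_n})\le I_0(u_0)<2\pi$; Lemma \ref{lem:bdd} gives $L>0$ and uniform $H^1_0$-boundedness. Hence, up to a subsequence, $u_{k+1,\e_n}\rightharpoonup \tilde u$ weakly in $H^1_0(B)$. Since Lemma \ref{lem:kthzero} gives $r_{k,\e_n}\to 0$, the restriction of $u_{k+1,\e_n}$ to $B(r_{k,\e_n},1)$ weakly solves the equation on domains that invade $B$, and passing to the limit in the weak formulation (using Lemma \ref{lem:F} from the appendix to handle $f_{\e_n}\to f_0$ in integrals) shows that $\tilde u\in H_{\text{r},0}^1(B)$ solves \eqref{i9} (up to sign).

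Next I would upgrade the weak convergence to strong via Lemma \ref{lem:cpt}: because $I_{\e_n}(u_{k+1,\e_n})$ stays strictly below the Moser--Trudinger threshold $2\pi$, the sequence is compact in $H^1_0(B)$, so $u_{k+1,\e_n}\to \tilde u$ strongly. Continuity of the functionals (again Lemma \ref{lem:F}) then yields $I_{\e_n}(u_{k+1,\e_n})\to I_0(\tilde u)=L$. The sign constraint $(-1)^k u_{k+1,\e_n}\ge 0$ passes to the limit, and nontriviality of $\tilde u$ follows from the lower bound $\|\tilde u\|^2=\lim\|u_{k+1,\e_n}\|^2\ge K>0$ of Lemma \ref{lem:bdd}; together with the strong maximum principle, $(-1)^k\tilde u$ is a positive radial solution of \eqref{i9}, hence $(-1)^k\tilde u\in\mathcal N_0$. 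Using that $F_0$ is even so $I_0(\tilde u)=I_0((-1)^k\tilde u)$, the definition of $u_0$ as the least energy positive solution gives $I_0(\tilde u)\ge I_0(u_0)$, so $L\ge I_0(u_0)$. Combined with Lemma \ref{lem:energy3}, this forces $L=I_0(u_0)$ and, since the subsequence was arbitrary, the full limit equals $I_0(u_0)$.

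The main obstacle I anticipate is justifying the strong $H^1_0$-compactness: although $u_{k+1,\e}$ is a genuine critical point on the annulus $B(r_{k,\e},1)$, it is only the zero extension that lives on $B$, and one must check that Lemma \ref{lem:cpt} applies to this extended sequence (either by viewing it as a Palais--Smale-type sequence on the shrinking annulus, or by transferring the compactness argument to the annular functional and then using $r_{k,\e}\to 0$). A secondary nuisance is ruling out loss of mass at the origin in the weak limit, but the sub-$2\pi$ energy bound, together with the radial setting and the Moser--Trudinger inequality, prevents any bubble of positive mass from concentrating at $0$ in the limit equation.
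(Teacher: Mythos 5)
Your overall strategy (keep the $\limsup$ from Lemma \ref{lem:energy3} and produce a matching $\liminf$ by compactness below the $2\pi$ threshold) is the right one, but there is a genuine gap at precisely the step you flag as "the main obstacle": you apply Lemma \ref{lem:cpt} directly to the zero-extended pieces $u_{k+1,\e_n}$. That lemma requires $J_n'(u_n)\to 0$ in $H^{-1}(B)$, whereas $u_{k+1,\e_n}$ is a critical point only of the functional on the shrinking annulus $B(r_{k,\e_n},1)$, not of $I_{\e_n}$ on $B$: testing against a $\psi$ that does not vanish at $r_{k,\e_n}$ produces a boundary term proportional to $r_{k,\e_n}u_{\e_n}'(r_{k,\e_n})$, which is not controlled at this stage. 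The paper is explicit about this difficulty at the beginning of Section \ref{sec:outer} ("we do not ensure $\lim_{\e\to0}I_\e'(u_{k+1,\e})=0$"), and the decay of that boundary term (Remark \ref{rem:outer2}) is itself deduced from the strong convergence established later in Proposition \ref{prop:outer} by a separate Lions concentration--compactness argument; invoking it here would be circular. So the compactness of $u_{k+1,\e_n}$ is exactly what cannot be assumed in the proof of this lemma.

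The paper sidesteps the issue with a comparison you do not use. Since $u_{k+1,\e}\in\mathcal{N}_\e$ by the definition of $\mathcal{N}_{k,\e}$, and the least-energy positive solution $u_{0,\e}$ of \eqref{i11} on the \emph{whole ball} minimizes $I_\e$ over $\mathcal{N}_\e$, one has $I_\e(u_{k+1,\e})\ge I_\e(u_{0,\e})$. The sequence $u_{0,\e}$ is a genuine critical point of $I_\e$ on $B$, so Lemma \ref{lem:cpt} applies to it legitimately, and the argument already carried out in the proof of Lemma \ref{lem:energy3} shows $\liminf_{\e\to0}I_\e(u_{0,\e})\ge I_0(u_0)$: any subsequential limit strictly below $I_0(u_0)<2\pi$ would yield, by strong convergence, a nontrivial element of $\mathcal{N}_0$ with energy below the minimum. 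This gives $\liminf_{\e\to0}I_\e(u_{k+1,\e})\ge I_0(u_0)$ without ever touching the compactness of $u_{k+1,\e}$, and together with the $\limsup$ statement of Lemma \ref{lem:energy3} it concludes. I would replace your direct use of Lemma \ref{lem:cpt} on $u_{k+1,\e_n}$ by this barrier argument; the remaining ingredients of your write-up (evenness of $F_0$, nontriviality via Lemma \ref{lem:bdd}, the maximum principle) then become unnecessary.
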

\begin{proof}
Since $\liminf_{\e\to0} I_{\e}(u_{k+1,\e})\le I_0(u_0)$, arguing as in the previous proof, we can get
\[
\liminf_{\e\to0} I_{\e}(u_{k+1,\e})= I_0(u_0).
\]
Then combining this together with the final assertion in the previous lemma, we complete the proof.
\end{proof}
\section{Behavior of $u_\e$ in the ball $B_{r_{1,\e}}$}\label{sec:ball}
Let us start our main argument with studying the behavior on a ball. To this end, we first observe that $u_{1,\e}=u_{\e}|_{B_{r_{1,\e}}}$ is a solution to  
\begin{equation}
\begin{cases}\label{a1}
-\Delta u=\lambda ue^{u^{2}+|u|^{1+\e}},\ u>0\text{ in }B_{r_{1,\e}}\\
u=0\text{ on }\partial B_{r_{1,\e}},
\end{cases}
\end{equation}
for $\e>0$. Then the results in \cite{GNN} shows that $u_{1,\e}$  is radial and $\|u_{1,\e}\|_{L^{\infty}(B_{r_{1,\e}})}=u_{1,\e}(0)$. Next we see that $v_{1,\e}(x):=u_{\e}(r_{1,\e}x)$ ($x\in B_1$) is a solution of
\begin{equation}
\begin{cases}\label{a2}
-\Delta v=\lambda r_{1,\e}^2ve^{v^{2}+|v|^{1+\e}},\ v>0\text{ in }B\\
v=0\text{ on }\partial B,
\end{cases}
\end{equation}
for $\e>0$ and $\|v_{1,\e}\|_{L^{\infty}(B)}=v_{\e}(0)$. Notice $\lambda r_{1,\e}^2\to 0$ as $\e\to0$ by Lemma \ref{lem:kthzero}. Furthermore, by Lemma \ref{lem:energy3}, 
we get
\[
J_\e(v_\e):=\frac12\int_B|\nabla v_\e|^2dx-r_{1,\e}^2\int_BF_\e(v_\e)dx\to 2\pi,
\]
as $\e\to0$. We have the following
\begin{proposition}\label{prop:ball} We get $v_{1,\e}\rightharpoonup 0$ weakly in $H^1_0(B)$, $v_{1,\e}(0)\to \infty$ and 
\[
\int_B|\nabla v_{1,\e}|^2dx\rightarrow 4\pi,
\]
as $\e\to0$. Furthermore, let $\gamma_{1,\e}>0$ be such that $2\lambda r_{1,\e}^2v_{1,\e}(0)^2e^{v_{1,\e}(0)^2+v_{1,\e}(0)^{1+\e}}\gamma_{1,\e}^2=1$. Then we have $\gamma_{1,\e}\to 0$ and
\[
2v_{1,\e}(0)(v_{1,\e}(\gamma_{1,\e} x)-v_{1,\e}(0))\to \log{\frac{1}{(1+|x|^2/8)^2}}\text{ in }C^2_{\text{loc}}(\R^2),
\]
as $\e\to0$.
\end{proposition}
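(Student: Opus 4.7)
The plan is to carry out a Moser--Trudinger blow--up analysis on $v_{1,\e}$, using $\lambda r_{1,\e}^2\to 0$ (Lemma \ref{lem:kthzero}) to render the equation essentially linear away from the origin.

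First I would show $c_\e:=v_{1,\e}(0)\to\infty$. If $c_\e$ stayed bounded, the right--hand side of \eqref{a2} would be uniformly bounded, and elliptic regularity with zero boundary data would force $v_{1,\e}\to 0$ in $C^1(\overline{B})$, contradicting $\|v_{1,\e}\|^2\ge K$ from Lemma \ref{lem:bdd}. Next, for $v_{1,\e}\rightharpoonup 0$ in $H^1_0(B)$, Lemma \ref{lem:bdd} produces a weakly convergent subsequence to some $v_0\in H^1_0(B)$. On any compact $K\Subset B\setminus\{0\}$ the radial lemma \ref{lem:rl} gives a uniform $L^\infty$ bound for $v_{1,\e}$, so the right--hand side of \eqref{a2} is uniformly bounded on $K$ and elliptic estimates upgrade the convergence to $C^2_{\mathrm{loc}}(B\setminus\{0\})$. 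Hence $v_0$ is radial and harmonic on $B\setminus\{0\}$, so $v_0(r)=a+b\log r$; finiteness of the Dirichlet integral at the origin forces $b=0$ and $v_0(1)=0$ forces $a=0$. Uniqueness of the limit gives convergence of the full sequence, and by Rellich $v_{1,\e}\to 0$ strongly in every $L^p(B)$ with $p<\infty$.

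Now set $z_\e(y):=2c_\e(v_{1,\e}(\gamma_{1,\e}y)-c_\e)$ on $B_{1/\gamma_{1,\e}}$. A direct computation from \eqref{a2} and the defining relation for $\gamma_{1,\e}$ yields
\begin{equation*}
-\Delta z_\e(y)=\frac{v_{1,\e}(\gamma_{1,\e}y)}{c_\e}\exp\bigl(v_{1,\e}(\gamma_{1,\e}y)^2-c_\e^2+v_{1,\e}(\gamma_{1,\e}y)^{1+\e}-c_\e^{1+\e}\bigr),
\end{equation*}
with $z_\e(0)=0$, $\nabla z_\e(0)=0$ and $z_\e\le 0$. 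Provided $z_\e$ is locally bounded, Taylor expansion gives $v_{1,\e}^2-c_\e^2=z_\e+O(c_\e^{-2})$, $v_{1,\e}^{1+\e}-c_\e^{1+\e}=O(c_\e^{\e-1})=o(1)$, and $v_{1,\e}/c_\e\to 1$, so the right--hand side tends locally uniformly to $e^{z_\e}$. Elliptic bootstrapping together with the Chen--Li classification of entire solutions of $-\Delta U=e^U$ on $\R^2$ with finite mass, $U(0)=0$ and $\nabla U(0)=0$, identifies the limit as $z_0(y)=\log\frac{1}{(1+|y|^2/8)^2}$.

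Three technical points remain: $\gamma_{1,\e}\to 0$, local boundedness of $z_\e$, and the energy limit. For $\gamma_{1,\e}\to 0$, I would combine the Nehari identity $\|v_{1,\e}\|^2=\lambda r_{1,\e}^2\int_B v_{1,\e}^2 e^{v_{1,\e}^2+v_{1,\e}^{1+\e}}$ with the strong $L^2$ convergence $v_{1,\e}\to 0$ from the first step and the pointwise bound $v_{1,\e}\le c_\e$; the integrand must then concentrate on a shrinking set with large amplitude, forcing $\lambda r_{1,\e}^2 c_\e^2 e^{c_\e^2+c_\e^{1+\e}}\to\infty$. For local boundedness of $z_\e$, radial monotonicity of $v_{1,\e}$ passes to $z_\e$, so $z_\e(|y|)\ge z_\e(R)$ for $|y|\le R$, and a Brezis--Merle dichotomy applied to the bounded coefficient equation above, combined with $z_\e(0)=0$, rules out $z_\e(R)\to-\infty$ on fixed compacta. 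For the energy, I would start from Nehari, change variables $x=\gamma_{1,\e}y$, and substitute $\lambda r_{1,\e}^2\gamma_{1,\e}^2=(2c_\e^2)^{-1}e^{-c_\e^2-c_\e^{1+\e}}$ to obtain
\begin{equation*}
\int_B|\nabla v_{1,\e}|^2\,dx=\int_{B_{1/\gamma_{1,\e}}}\frac{v_{1,\e}(\gamma_{1,\e}y)^2}{2c_\e^2}\exp\bigl(v_{1,\e}^2-c_\e^2+v_{1,\e}^{1+\e}-c_\e^{1+\e}\bigr)\,dy,
\end{equation*}
and pass to the limit via Fatou and dominated convergence against a Liouville majorant, reducing the integral to $\tfrac12\int_{\R^2}e^{z_0}\,dy=\tfrac12\cdot 8\pi=4\pi$. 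The hardest step is $\gamma_{1,\e}\to 0$, since it requires converting the soft $L^p$ decay of $v_{1,\e}$ into a quantitative rate for the exponential amplification at the origin.
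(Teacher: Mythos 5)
The paper does not prove this proposition directly: it is dispatched in one line as ``a direct consequence of Theorem 2 in \cite{AD}'', the Adimurthi--Druet blow-up analysis for positive solutions of Moser--Trudinger critical equations with vanishing parameter (here $\lambda r_{1,\e}^2\to 0$). Your proposal instead reproves that result from scratch, which is a legitimately different route; most of it is sound and in fact mirrors what the authors do for the annular regions in Lemma \ref{lem:4-step3} (the bound $-\Delta z_\e\le 1$ together with $z_\e\le 0=z_\e(0)$ and radial monotonicity gives local boundedness; the Nehari identity $K\le \lambda r_{1,\e}^2 e^{c_\e^2+c_\e^{1+\e}}\|v_{1,\e}\|_{L^2}^2$ with $\|v_{1,\e}\|_{L^2}\to 0$ gives $\gamma_{1,\e}\to 0$ — this last step is actually easy, not ``the hardest''). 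The identification of the weak limit as $0$ via the radial lemma and harmonicity off the origin is also fine.

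The genuine gap is in the energy limit $\int_B|\nabla v_{1,\e}|^2\to 4\pi$. After rescaling you need to pass to the limit in
\[
\int_{B_{1/\gamma_{1,\e}}}\frac{v_{1,\e}(\gamma_{1,\e}y)^2}{2c_\e^2}\,e^{v_{1,\e}^2-c_\e^2+v_{1,\e}^{1+\e}-c_\e^{1+\e}}\,dy ,
\]
and Fatou only yields $\liminf\ge \tfrac12\int_{\R^2}e^{z_0}=4\pi$. The matching upper bound requires showing that no mass escapes to the region $R\le|y|\le 1/\gamma_{1,\e}$, i.e.\ a \emph{global} decay estimate of the type $z_\e(y)\le -(4-\delta)\log|y|+C$ uniformly in $\e$. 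You invoke ``dominated convergence against a Liouville majorant'' without constructing such a majorant; this is precisely the hard part of any direct energy-quantization argument and cannot be waved through. (The same finite-mass issue is needed before the Chen--Li classification can be applied, though there Fatou plus the uniform bound $\|v_{1,\e}\|^2\le K'$ does suffice.) In the framework of this paper the gap is avoidable without any majorant: since $u_{1,\e}\in\mathcal N_\e$ with $\int_B f_\e(u_{1,\e})u_{1,\e}=\|u_{1,\e}\|^2\le K'$ and $u_{1,\e}\rightharpoonup 0$, Lemma \ref{lem:F} gives $\int_B F_\e(u_{1,\e})\to 0$, and Lemma \ref{lem:energy3} gives $I_\e(u_{1,\e})\to 2\pi$, whence $\|v_{1,\e}\|^2=\|u_{1,\e}\|^2=2I_\e(u_{1,\e})+2\int_BF_\e(u_{1,\e})\to 4\pi$. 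You should route the energy statement through these lemmas rather than through the unproven global estimate.
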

\begin{proof}
It is a direct consequence of Theorem 2 in \cite{AD}. 
\end{proof}
\begin{corollary}\label{cor:ball}
We obtain $u_{1,\e}\rightharpoonup 0$ weakly in $H^1_0(B)$, $u_{1,\e}(0)\to\infty$ and
\[
\int_{B_{r_{1,\e}}}|\nabla u_{1,\e}|^2dx\rightarrow 4\pi,
\]
and
\[
I_{\e}(u_{1,\e})\to 2\pi,
\]
as $\e\to0$. Furthermore, let $\delta_{1,\e}=r_{1,\e}\gamma_{1,\e}>0$. Then we have $\delta_{1,\e}\to 0$ and
\[
2u_{\e}(0)(u_{\e}(\delta_{1,\e} x)-u_{\e}(0))\to \log{\frac{1}{(1+|x|^2/8)^2}}\text{ in }C^2_{\text{loc}}(\R^2),
\]
as $\e\to0$.
\end{corollary}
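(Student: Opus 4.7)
The plan is to read off the corollary from Proposition \ref{prop:ball} by undoing the rescaling $v_{1,\e}(x)=u_\e(r_{1,\e}x)$ which produced \eqref{a2} from \eqref{a1}. Since the Dirichlet energy is conformally invariant in two dimensions, one has
\[
\int_{B_{r_{1,\e}}}|\nabla u_{1,\e}|^2\,dx=\int_{B}|\nabla v_{1,\e}|^2\,dx,
\]
so the limit $4\pi$ transfers directly from Proposition \ref{prop:ball}, and $u_{1,\e}(0)=v_{1,\e}(0)\to\infty$ is immediate. The identity $I_\e(u_{1,\e})\to 2\pi$ is the $i=1$ case of Lemma \ref{lem:energy3}, so nothing new is needed there.

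For $u_{1,\e}\rightharpoonup 0$ weakly in $H^1_0(B)$, I would use that the above energy identity, combined with Lemma \ref{lem:bdd}, gives $\|u_{1,\e}\|_{H^1_0(B)}$ bounded; hence, up to a subsequence, $u_{1,\e}\rightharpoonup \bar u$ weakly. Because $u_{1,\e}$ (extended by zero) vanishes outside $B_{r_{1,\e}}$ and $r_{1,\e}\to 0$ by Lemma \ref{lem:kthzero}, the restriction of $u_{1,\e}$ to any annulus $B\setminus \overline{B_{\rho}}$ with $\rho>0$ is eventually zero, so $\bar u\equiv 0$. Uniqueness of the limit along every subsequence gives weak convergence of the full family.

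The local blow-up statement is then a clean substitution. Writing $\delta_{1,\e}=r_{1,\e}\gamma_{1,\e}$ and using the definition of $v_{1,\e}$,
\[
2u_\e(0)\bigl(u_\e(\delta_{1,\e}x)-u_\e(0)\bigr)=2v_{1,\e}(0)\bigl(v_{1,\e}(\gamma_{1,\e}x)-v_{1,\e}(0)\bigr),
\]
so the $C^2_{\text{loc}}(\R^2)$ convergence to $\log\frac{1}{(1+|x|^2/8)^2}$ is exactly the conclusion of Proposition \ref{prop:ball}. Finally $\delta_{1,\e}\to 0$ follows because $r_{1,\e}\to 0$ by Lemma \ref{lem:kthzero} and $\gamma_{1,\e}\to 0$ by Proposition \ref{prop:ball}. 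There is no genuine obstacle: the corollary is just a change-of-variables restatement of the proposition together with the already-established energy asymptotics. The only point that requires a little care is checking that the Dirichlet energy restricted to $B_{r_{1,\e}}$ is what transforms invariantly under the rescaling $x\mapsto r_{1,\e}x$ in two dimensions.
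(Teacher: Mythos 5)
Your proposal is correct and follows essentially the same route as the paper, whose proof of this corollary consists precisely of citing Proposition \ref{prop:ball} and Lemma \ref{lem:energy3} and unwinding the scaling $v_{1,\e}(x)=u_\e(r_{1,\e}x)$; you have simply written out the details (conformal invariance of the Dirichlet energy in dimension two, the shrinking support argument for the weak limit, and the substitution $\delta_{1,\e}=r_{1,\e}\gamma_{1,\e}$) that the paper leaves implicit.
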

\begin{proof} The proof follows from Proposition \ref{prop:ball} and Lemma \ref{lem:energy3}.
\end{proof}
%
%
%
%
%
\section{Behavior of $u_\e$ on annuli}\label{sec:annulus}
We next investigate the behavior of $u_\e$ on annuli. Fix $i\in\{2,\cdots,k\}$ and set $u_{i,\e}:=u_{\e}|_{B(r_{i-1,\e},r_{i,\e})}$. Then $u_{i,\e}\in H^1_0(B)$ by zero extension. Since $u_{i,\e}$ is radial, we may assume it satisfies 
\begin{equation}\label{c1}
\begin{cases}
-u_{i,\e}''-\frac1ru_{i,\e}'=\lambda u_{i,\e} e^{u_{i,\e}^{2}+u_{i,\e}^{1+\e}},&\hbox{ in }(r_{i-1,\e},r_{i,\e})\\
u_{i,\e}>0&\hbox{ in }(r_{i-1,\e},r_{i,\e})\\
u_{i,\e}(r_{i-1,\e})=u_{i,\e}(r_{i,\e})=0.
\end{cases}
\end{equation}
Now we have the following result.
\begin{proposition}\label{prop:annulus}
We get $u_{i,\e}\rightharpoonup 0$ weakly in $H^1_0(B)$,
\[
\int_{B(r_{i-1,\e},r_{i,\e})}|\nabla u_{i,\e}|^2dx\rightarrow 4\pi,
\]
and
\[
I_{\e}(u_{i,\e})\rightarrow 2\pi,
\]
as $\e\to0$. Moreover, let us denote by $M_{i,\e} r_{i,\e}\in(r_{i-1,\e},r_{i,\e})$ with $M_{i,\e}<1$, the point such that $||u_{i,\e}||_{L^{\infty}(r_{i-1,\e},r_{i,\e})}=u_{i,\e}(M_{i,\e}r_{i,\e})$. Then if we set $\delta_{i,\e}=\gamma_{i,\e}r_{i,\e}>0$ with 
\[
2\lambda||u_{i,\e}||_{L^{\infty}(r_{i-1,\e},r_{i,\e})}^2e^{||u_{i,\e}||_{L^{\infty}(r_{i-1,\e},r_{i,\e})}^2+||u_{i,\e}||_{L^{\infty}(r_{i-1,\e},r_{i,\e})}^{1+\e}}r_{i,\e}^2\gamma_{i,\e}^2=1,
\]
we get $\delta_{i,\e}\to 0$ and further, 
\[
\begin{split}
2||u_{i,\e}||_{L^{\infty}(r_{i-1,\e},r_{i,\e})}(u_{i,\e}(M_\e r_{i,\e}+\delta_{i,\e} r)&-||u_{i,\e}||_{L^{\infty}(r_{i-1,\e},r_{i,\e})})\\&\rightarrow \log\frac{1}{\left(1+r^2/8\right)^2} \text{ in } C^2_{\text{loc}}(\R^+),
\end{split}
\]
as $\e\to0$.
\end{proposition}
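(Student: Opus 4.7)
The strategy mirrors Proposition \ref{prop:ball} and Corollary \ref{cor:ball} for the ball, adapted to the annular geometry. The four items to establish are (a) the weak convergence $u_{i,\e}\rightharpoonup 0$ in $H^1_0(B)$, (b) $\|u_{i,\e}\|_{L^\infty}\to\infty$, (c) the energy quantization $\int_B|\nabla u_{i,\e}|^2dx\to 4\pi$ (matching $I_\e(u_{i,\e})\to 2\pi$ from Lemma \ref{lem:energy3}), and (d) the local Liouville profile around $M_{i,\e}r_{i,\e}$.

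The weak convergence is immediate: $u_{i,\e}$ is supported in $B(r_{i-1,\e},r_{i,\e})\subset B_{r_{k,\e}}$ with $r_{k,\e}\to 0$ by Lemma \ref{lem:kthzero}, while Lemma \ref{lem:bdd} provides a uniform $H^1$--bound, so every weak limit is an $H^1_0$--function supported in $\{0\}$ and hence zero. To get $\|u_{i,\e}\|_{L^\infty}\to\infty$, suppose on the contrary that this norm stays bounded along a subsequence; then $f_\e(u_{i,\e})$ is uniformly bounded and the Nehari identity $\|u_{i,\e}\|^2=\int_Bf_\e(u_{i,\e})u_{i,\e}\,dx$ combined with $|B(r_{i-1,\e},r_{i,\e})|\to 0$ forces $\|u_{i,\e}\|\to 0$, contradicting Lemma \ref{lem:bdd}. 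The defining equation for $\gamma_{i,\e}$ then immediately yields $\gamma_{i,\e}\to 0$ and $\delta_{i,\e}=r_{i,\e}\gamma_{i,\e}\to 0$.

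For the blow-up analysis, set $w_\e(r):=2\|u_{i,\e}\|_{L^\infty}\bigl(u_{i,\e}(M_{i,\e}r_{i,\e}+\delta_{i,\e}r)-\|u_{i,\e}\|_{L^\infty}\bigr)$, so that $w_\e(0)=0$ and $w_\e'(0)=0$ by the interior maximum. Rescaling the radial ODE \eqref{c1} and multiplying by $2\|u_{i,\e}\|_{L^\infty}\delta_{i,\e}^2$, the normalization of $\gamma_{i,\e}$ produces
\begin{equation*}
-w_\e''(r)-\frac{\gamma_{i,\e}}{M_{i,\e}+\gamma_{i,\e}r}\,w_\e'(r)=\frac{u_{i,\e}(M_{i,\e}r_{i,\e}+\delta_{i,\e}r)}{\|u_{i,\e}\|_{L^\infty}}\exp\!\bigl(w_\e(r)+R_\e(r)\bigr),
\end{equation*}
where $R_\e$ collects the remainders $w_\e^2/(4\|u_{i,\e}\|_{L^\infty}^2)$ and $u_{i,\e}^{1+\e}-\|u_{i,\e}\|_{L^\infty}^{1+\e}$ and tends to zero locally uniformly (the control on the second remainder uses $\e\log\|u_{i,\e}\|_{L^\infty}\to 0$, which follows from the uniform $H^1$--bound and elementary estimates on $F_\e$). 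Granted the key fact $M_{i,\e}/\gamma_{i,\e}\to 0$, the drift coefficient $\gamma_{i,\e}/(M_{i,\e}+\gamma_{i,\e}r)\to 1/r$ locally uniformly on $(0,\infty)$ and the right-hand side converges to $e^{w_\e}$, so the limit equation is the two-dimensional radial Liouville equation $-w''-w'/r=e^w$ with $w(0)=w'(0)=0$, whose unique smooth solution is $\log(1/(1+r^2/8)^2)$. Standard elliptic bootstrap upgrades weak convergence to $C^2_{\text{loc}}(\R^+)$, and a change of variables in the Nehari identity combined with the Liouville mass $\int_{\R^2}e^{W}dy=8\pi$ gives $\int_B|\nabla u_{i,\e}|^2dx\to 4\pi$.

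The main obstacle is the quantitative estimate $M_{i,\e}/\gamma_{i,\e}\to 0$: a priori one only knows $r_{i-1,\e}/r_{i,\e}\le M_{i,\e}<1$, and a bounded or divergent ratio would replace the two-dimensional radial Liouville limit by a genuinely one-dimensional profile of the form $-2\log\cosh(r/\sqrt{2})$ (for the equation $-w''=e^w$) or by an intermediate profile, carrying a different total mass and contradicting both the $4\pi$ energy count and the normalization $I_\e(u_{i,\e})\to 2\pi$ of Lemma \ref{lem:energy3}. I would rule out the alternative by supposing $M_{i,\e}/\gamma_{i,\e}$ admits a subsequential limit $\alpha\in(0,\infty]$, extracting the corresponding limit ODE $-w''-w'/(\alpha+r)=e^w$ on a half-line (with the convention $-w''=e^w$ when $\alpha=\infty$), and comparing its radial mass $\int_0^\infty re^{w(r)}dr$ with the $4\pi$ budget obtained independently from the Pohozaev identity applied on the annulus $B(r_{i-1,\e},r_{i,\e})$, the boundary contributions of which are controlled by the uniform energy bound in Lemma \ref{lem:bdd}.
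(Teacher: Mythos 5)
Your architecture matches the paper's: blow up at $M_{i,\e}r_{i,\e}$ after the rescaling $v_\e(s)=u_{i,\e}(r_{i,\e}s)$, and the whole proof hinges on showing $M_{i,\e}/\gamma_{i,\e}\to 0$, which you correctly isolate as the crux. But the mechanism you propose for that crux does not close. First, there is a circularity: you plan to derive $\int_B|\nabla u_{i,\e}|^2\to 4\pi$ \emph{from} the Liouville mass of the blow-up limit, yet the selection of that limit (ruling out $\alpha:=\lim M_{i,\e}/\gamma_{i,\e}\in(0,\infty]$) is supposed to use the $4\pi$ budget. The paper avoids this by taking the sharp quantization as an \emph{input}: Lemma \ref{lem:energy3} gives $I_\e(u_{i,\e})\to 2\pi$, and together with Lemma \ref{lem:F} and $u_{i,\e}\rightharpoonup 0$ this yields $\|u_{i,\e}\|^2=\int_Bf_\e(u_{i,\e})u_{i,\e}\,dx\to 4\pi$ \emph{before} any blow-up. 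Your alternative source of the budget --- a Pohozaev identity on the annulus with boundary terms ``controlled by the uniform energy bound in Lemma \ref{lem:bdd}'' --- cannot work: for $\alpha=m>0$ the limit profile is the singular Liouville solution $Z(s)=\log\frac{4\beta^2m^{\beta+2}s^{\beta-2}}{((\beta+2)m^\beta+(\beta-2)s^\beta)^2}$ with $\beta=\sqrt{2m^2+4}$, whose radial mass $\sqrt{2m^2+4}$ exceeds the good value $2$ only by $O(m^2)$; you therefore need the \emph{exact} constant, not an upper bound of order $K'$.

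Second, your limit ODE $-w''-w'/(\alpha+r)=e^w$ ``on a half-line'' is underdetermined: the left endpoint of the limit domain is $-l$ with $l=\lim(M_{i,\e}-r_{i-1,\e}/r_{i,\e})/\gamma_{i,\e}$, which a priori differs from $\alpha$ (only $l\le\alpha$ is clear). The paper must first prove $l=\alpha$ --- using that $z_\e$ equals $-2v_\e^2(M_\e)\to-\infty$ at the left endpoint together with a one-sided derivative bound and the mean value theorem --- before the translated problem becomes the singular Cauchy problem on $(0,\infty)$ whose unique solution has the quoted mass; without this, the case $l<\alpha<\infty$ leaves a regular ODE whose solutions you have not classified. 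Two smaller points: $\gamma_{i,\e}\to 0$ is not ``immediate'' from the definition, since $r_{i,\e}^2\to0$ could in principle beat $e^{\|u_{i,\e}\|_\infty^2}$; the paper derives the needed lower bound $r_{i,\e}^2e^{v_\e^2(M_\e)+v_\e^{1+\e}(M_\e)}\ge C>0$ from the Poincar\'e inequality on the shrinking annulus, the same device that gives $\|u_{i,\e}\|_\infty\to\infty$ (Lemma \ref{lem:infty}). And in the good case the limit equation is singular at $r=0$, so the profile $\log(1+r^2/8)^{-2}$ is selected from the two-parameter family \eqref{C} by the condition $z(0)=0$ (finiteness at the origin), not by a smooth Cauchy problem with $w(0)=w'(0)=0$.
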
 
In the following, we set $M_\e:=M_{i,\e}$ for simplicity. We get the following.
\begin{lemma}\label{lem:infty}
$u_{i,\e}(M_\e r_{i,\e})\rightarrow+\infty$ as $\e\to0$.
\end{lemma}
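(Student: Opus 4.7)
The plan is to argue by contradiction, exploiting that the support of $u_{i,\e}$ shrinks to a point while the Nehari condition forces its Dirichlet norm to stay bounded away from zero. Suppose the claim fails, so there exist a sequence $\e_n\to 0$ and a constant $C>0$ with
\[
\|u_{i,\e_n}\|_{L^\infty(r_{i-1,\e_n},r_{i,\e_n})}=u_{i,\e_n}(M_{\e_n}r_{i,\e_n})\le C.
\]
The nonlinearity $f_{\e}(s)/s=\lambda e^{s^{2}+|s|^{1+\e}}$ is then uniformly bounded in $L^\infty$ along the sequence on the annulus where $u_{i,\e_n}$ is supported, since $|u_{i,\e_n}|^{1+\e_n}\le 1+u_{i,\e_n}^{2}\le 1+C^2$.

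Next I would use that the support of $u_{i,\e_n}$ is contained in the annulus $(r_{i-1,\e_n},r_{i,\e_n})\subset B_{r_{k,\e_n}}$, whose measure tends to $0$ by Lemma \ref{lem:kthzero} (since $r_{k,\e}\to 0$ as $\e\to 0$ for $i\le k$). Hence
\[
\int_{B(r_{i-1,\e_n},r_{i,\e_n})}u_{i,\e_n}^2\,dx\le C^2\,\bigl|B(r_{i-1,\e_n},r_{i,\e_n})\bigr|\longrightarrow 0.
\]
Combining these two bounds and using the Nehari identity (since $u_{i,\e_n}\in \mathcal{N}_{\e_n}$),
\[
\|u_{i,\e_n}\|^2=\lambda\int_{B}u_{i,\e_n}^2\,e^{u_{i,\e_n}^{2}+|u_{i,\e_n}|^{1+\e_n}}\,dx\le C'\int_{B(r_{i-1,\e_n},r_{i,\e_n})}u_{i,\e_n}^2\,dx\longrightarrow 0.
\]
This contradicts the uniform lower bound $\|u_{i,\e_n}\|^2\ge K>0$ provided by Lemma \ref{lem:bdd}, completing the proof.

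The only step requiring care is ensuring the exponential factor is truly controlled: this is where the assumed uniform bound on $u_{i,\e}$ together with the elementary estimate $|u|^{1+\e}\le 1+u^2$ is used so that the full exponent $u^2+|u|^{1+\e}$ is bounded along the sequence. Once this is in place, the argument reduces to the volume of the shrinking annulus going to zero, which is the conceptual heart: the $L^\infty$ bound together with the concentration of support would force $u_{i,\e}$ to leave the Nehari manifold, hence the maximum must blow up.
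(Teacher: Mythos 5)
Your argument is correct. It follows the same basic template as the paper's proof --- test the equation against $u_{i,\e}$ itself (the Nehari identity on the annulus) and exploit that the annulus $B(r_{i-1,\e},r_{i,\e})$ shrinks to the origin by Lemma \ref{lem:kthzero} --- but the mechanism by which the shrinking is converted into blow-up of the maximum differs. The paper argues directly: it bounds $\int u_{i,\e}^2\,r\,dr$ by $\lambda_1(r_{i-1,\e},r_{i,\e})^{-1}\int (u_{i,\e}')^2\,r\,dr$ via the Poincar\'e inequality, and since the first eigenvalue of the shrinking annulus tends to $+\infty$, the identity $1\le \lambda\,e^{u_{i,\e}^2(M_\e r_{i,\e})+u_{i,\e}^{1+\e}(M_\e r_{i,\e})}/\lambda_1$ forces the exponential factor, hence the maximum, to blow up; no contradiction and no a priori bound on $\|u_{i,\e}\|$ is needed. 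You instead argue by contradiction, replacing the eigenvalue estimate by the cruder bound $\int u_{i,\e}^2\le C^2\,|B(r_{i-1,\e},r_{i,\e})|\to 0$ (which is only available once you have assumed the $L^\infty$ bound), and then you need the uniform lower bound $\|u\|^2\ge K$ on the Nehari manifold from Lemma \ref{lem:bdd} to close the contradiction. Both routes are elementary and complete; the paper's version is marginally more self-contained (it does not invoke Lemma \ref{lem:bdd}), while yours avoids any discussion of eigenvalue asymptotics on annuli.
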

\begin{proof}
Integrating \eqref{c1} we get
\[
\begin{split}
&\int_{r_{i-1,\e}}^{r_{i,\e}}(u_{i,\e}')^2rdr=\lambda\int_{r_{i-1,\e}}^{r_{i,\e}}u_{i,\e}^2e^{u_{i,\e}^2+u_{i,\e}^{1+\e}}rdr
\\
&\le\lambda e^{u_{i,\e}^2(M_\e r_{i,\e})+u_{i,\e}^{1+\e}(M_\e r_{i,\e})}\int_{r_{i-1,\e}}^{r_{i,\e}}u_{i,\e}^2rdr\\
&\hbox{(using the Poincare inequality) }
\le \lambda \frac{e^{u_{i,\e}^2(M_\e r_{i,\e})+u_{i,\e}^{1+\e}(M_\e r_{i,\e})}}{\lambda_1(r_{i-1,\e},r_{i,\e})}\int_{r_{i-1,\e}}^{r_{i,\e}}(u_{i,\e}')^2rdr,
\end{split}
\]
where  $\lambda_1(r_{i-1,\e},r_{i,\e})$ is the first eigenvalue of the operator $-u''-\frac1ru'$ in $(r_{i-1,\e},r_{i,\e})$. Since $r_{i-1,\e},r_{i,\e}\rightarrow0$ we get that $\lambda_1(r_{i-1,\e},r_{i,\e})\rightarrow+\infty$ as $\e\rightarrow0$. This gives the claim.
\end{proof}
Now, let us consider the scaled function, $v_\e:\left(\frac{r_{i-1,\e}}{r_{i,\e}},1\right)\rightarrow\R$ defined as
\[v_\e(r)=u_{i,\e}(r_{i,\e}r)
\]
which satisfies
\begin{equation}\label{c2}
\begin{cases}
-v_\e''-\frac1rv_\e'=\lambda r_{i,\e}^2v_\e e^{v_\e^{2}+v_\e^{1+\e}}&\hbox{ in }\left(\frac{r_{i-1,\e}}{r_{i,\e}},1\right),\\
v_\e>0&\hbox{ in }\left(\frac{r_{i-1,\e}}{r_{i,\e}},1\right),\\
v_\e(\frac{r_{i-1,\e}}{r_{i,\e}})=v_\e(1)=0.
\end{cases}
\end{equation}
Set
\[
r_\e=\frac{r_{i-1,\e}}{r_{i,\e}}.
\]
Then we have the following local behavior.
\begin{lemma}\label{lem:4-step3}
Choose $M_\e \in(r_{\e},1)$ as in Proposition \ref{prop:annulus}. Then if we set $\gamma_{i,\e}>0$ so that
\[
2\lambda\|v_{\e}\|_{L^{\infty}(r_{\e},1)}^2e^{\|v_{\e}\|_{L^{\infty}(r_{\e},1)}^2+\|v_{\e}\|_{L^{\infty}(r_{\e},1)}^{1+\e}}r_{i,\e}^2\gamma_{i,\e}^2=1,
\]
we get $\gamma_{i,\e}\to 0$ and 
\[
2\|v_{\e}\|_{L^{\infty}(r_{\e},1)}(v_\e(M_\e+\gamma_{i,\e}r)-\|v_{\e}\|_{L^{\infty}(r_{\e},1)})\rightarrow z(r)=\log\frac{1}{\left(1+r^2/8\right)^2} \text{ in } C^2_{\text{loc}}(0,+\infty),
\]
as $\e\to0$.
\end{lemma}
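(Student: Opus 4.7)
I would follow the blow-up scheme used for the ball (Proposition \ref{prop:ball} and \cite[Theorem 2]{AD}), rescaling $v_\e$ around its maximum point $M_\e$. First I would verify $\gamma_{i,\e}\to 0$: by Lemma \ref{lem:bdd} the sequence $v_\e$ is bounded in $H^1_0(B)$, while Lemma \ref{lem:infty} gives $L_\e:=\|v_\e\|_{L^\infty(r_\e,1)}\to +\infty$; testing \eqref{c2} against $v_\e$ and a standard comparison at the maximum force $\lambda r_{i,\e}^2 L_\e^2 e^{L_\e^2+L_\e^{1+\e}}\to +\infty$, hence $\gamma_{i,\e}\to 0$.

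Next, set $w_\e(r):=2L_\e\bigl(v_\e(M_\e+\gamma_{i,\e}r)-L_\e\bigr)$; then $w_\e(0)=0$, $w_\e'(0)=0$, and $w_\e\le 0$. A direct computation from \eqref{c2} and the definition of $\gamma_{i,\e}$ yields
\[
w_\e''(r)+\frac{\gamma_{i,\e}}{M_\e+\gamma_{i,\e}r}w_\e'(r)+\bigl(1+o_\e(1)\bigr)e^{w_\e(r)}=0,
\]
with $o_\e(1)\to 0$ uniformly on compact subsets, via the Taylor expansion $v_\e^{2}+v_\e^{1+\e}=L_\e^2+L_\e^{1+\e}+w_\e+O(L_\e^{\e-1}|w_\e|)+O(L_\e^{-2}w_\e^2)$ together with $L_\e\to+\infty$, $\e\to 0$. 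Since $w_\e\le 0$ the nonlinear term is bounded, so the ODE yields uniform $C^2$ bounds on compact subsets of $(0,+\infty)$; Arzel\`a--Ascoli then produces a subsequential limit $w_\e\to w$ in $C^2_{\mathrm{loc}}(0,+\infty)$.

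Up to a further subsequence $M_\e/\gamma_{i,\e}\to\sigma_0\in[0,+\infty]$, and $w$ solves $-w''-w'/(r+\sigma_0)=e^{w}$ with $w(0^+)=0$, $w'(0^+)=0$, and $w\le 0$ (the first-order term is read as $0$ when $\sigma_0=+\infty$). The main difficulty is to show $\sigma_0=0$, which I would handle by upgrading to the two-dimensional picture: set $\tilde w_\e(y):=2L_\e\bigl(u_\e(P_\e+\delta_{i,\e}y)-L_\e\bigr)$ with $P_\e:=(M_\e r_{i,\e},0)\in\R^2$ and $\delta_{i,\e}:=r_{i,\e}\gamma_{i,\e}$. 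Because $u_\e$ is radial, $\tilde w_\e$ is symmetric across the $y_1$-axis, satisfies $-\Delta\tilde w_\e=(1+o(1))e^{\tilde w_\e}$ with $\tilde w_\e(0)=0$ and $\nabla\tilde w_\e(0)=0$, and its restriction to $y_2=0$ equals $w_\e$. Standard concentration--compactness together with the Chen--Li classification of finite-energy solutions to $-\Delta\tilde w=e^{\tilde w}$ on $\R^2$ then forces $\tilde w(y)=-2\log(1+|y|^2/8)$; restricting to the $y_1$-axis and matching with the one-dimensional equation above gives $\sigma_0=0$. Hence the limit equation is the radial $2$D Liouville equation, whose unique regular solution with the above initial conditions is $w(r)=-2\log(1+r^2/8)$, and uniqueness of the limit upgrades the convergence from a subsequence to the full family.
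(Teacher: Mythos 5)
Your overall scheme (rescale at the maximum point, obtain a Liouville-type limit, identify the profile) is the one the paper uses, and your preliminary steps --- $\gamma_{i,\e}\to 0$ via the Poincar\'e inequality together with Lemma \ref{lem:infty}, the derivation of the rescaled ODE, and the uniform $C^1$ bounds on compact sets obtained from $-z_\e''-\frac{1}{M_\e/\gamma_\e+r}z_\e'\le 1$ --- match the paper's proof. The gap is exactly at the step you yourself call ``the main difficulty''. Writing $m=\lim M_\e/\gamma_{i,\e}$ and $l=\lim (M_\e-r_\e)/\gamma_{i,\e}$, one must rule out two degenerate regimes: $m=+\infty$, where the limit is the one-dimensional solution $z(s)=\log\frac{4e^{\sqrt 2 s}}{(1+e^{\sqrt2 s})^2}$ of $-z''=e^z$ on $\R$, and $0<l\le m<+\infty$, where the limit (after translation) is the singular radial solution $Z(s)=\log\frac{4\alpha^2m^{\alpha+2}s^{\alpha-2}}{\left((\alpha+2)m^\alpha+(\alpha-2)s^\alpha\right)^2}$ with $\alpha=\sqrt{2m^2+4}$. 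Your appeal to ``standard concentration--compactness together with the Chen--Li classification'' cannot dispose of either: in the first regime the limit does solve $-\Delta \tilde w=e^{\tilde w}$ on all of $\R^2$ but with $\int_{\R^2}e^{\tilde w}=+\infty$, so the classification is silent; in the second, either the rescaled inner boundary survives as a hole (if $m>l$) or the limit equation acquires a Dirac mass at $y=(-m,0)$ (if $m=l>0$), so the hypothesis ``entire solution of $-\Delta\tilde w=e^{\tilde w}$'' fails and would itself have to be proved. These regimes are not formal pathologies: Remark \ref{R} recalls that for the Lane--Emden problem the analogue of the second regime \emph{does} occur, so no soft classification argument can decide between them; the decision must come from quantitative information specific to this nonlinearity.

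What actually excludes them in the paper is the energy quantization of Lemma \ref{lem:energy3}: each annular component carries Dirichlet energy $4\pi+o(1)$, i.e.\ $\lambda r_{i,\e}^2\int_{r_\e}^1 v_\e^2e^{v_\e^2+v_\e^{1+\e}}r\,dr\to 2$. In the regime $m=+\infty$ the rescaled mass is bounded below by a quantity of order $M_\e/\gamma_\e\to+\infty$ (Fatou), contradicting this bound; in the regime $m=l>0$ the explicit formula gives $\frac12\int_0^\infty e^{Z(s)}s\,ds=\sqrt{2m^2+4}>2$, again a contradiction; and the intermediate reduction from $m\ge l$ to $m=l$ uses that $z_\e\to-\infty$ at the left endpoint combined with the mean value theorem. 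Your proposal never brings the value $4\pi$ of the limiting energy into play, so it cannot close this step. The final identification of the profile from $-z''-\frac1r z'=e^z$, $z\le 0$, $z(0)=0$ is fine and is carried out the same way in the paper, via the two-parameter family \eqref{C} pinned down by the condition $z(0)=0$.
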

\begin{proof}
Let $v_\e$, $r_{\e}$ and $M_\e\in(r_\e,1)$ as above. For $\gamma_\e>0$, which will be chosen later,  we define the scaled function
\begin{equation}
z_\e(r)=2v_{\e}(M_{\e})(v_\e(M_\e+\gamma_\e r)-v_{\e}(M_{\e})).
\end{equation}
We have that $z_\e$ solves the equation,
\[
\begin{cases}
-z_\e''-\frac1{\frac{M_\e}{\gamma_\e}+r}z_\e'=2\lambda\gamma_\e^2 r_{i,\e}^2e^{ v_\e^2(M_\e)+ v_\e^{1+\e}(M_\e)} v_\e^2(M_\e)\left(\frac{z_\e}{2v_{\e}^2(M_\e)}+1\right)\\\ \ \ \ \ \ \ \ \ \ \ \ \ \ \ \ \times e^{\left\{z_\e\left(\frac{z_\e}{4v_{\e}^2(M_\e)}+1\right)+v_\e^{1+\e}(M_\e)\left(\left|\frac{z_\e}{2v_{\e}^2(M_\e)}+1\right|^{1+\e}-1\right)\right\}}
\hbox{ in }\left(\frac{r_\e-M_\e}{\gamma_\e},\frac{1-M_\e}{\gamma_\e}\right),\\
z_\e(r)\le0,\ z_\e(0)=z_\e'(0)=0,\\
z_\e\left(\frac{r_\e-M_\e}{\gamma_\e}\right)=z_\e\left(\frac{1-M_\e}{\gamma_\e}\right)=-2v_\e^2(M_\e)\rightarrow-\infty\ (\e\to0).
\end{cases}
\]
So setting
\[
2\lambda\gamma_\e^2 r_{i,\e}^2e^{ v_\e^2(M_\e)+ v_\e^{1+\e}(M_\e)} v_\e^2(M_\e)=1
\]
we get
\begin{equation}\label{b1}
\begin{cases}
-z_\e''-\frac1{\frac{M_\e}{\gamma_\e}+r}z_\e'=\left(\frac{z_\e}{2v_{\e}^2(M_\e)}+1\right)e^{\left\{z_\e\left(\frac{z_\e}{4v_{\e}^2(M_\e)}+1\right)+v_\e^{1+\e}(M_\e)\left(\left|\frac{z_\e}{2v_{\e}^2(M_\e)}+1\right|^{1+\e}-1\right)\right\}}\\
\ \ \ \ \ \ \ \ \ \ \ \ \ \ \ \ \ \ \ \ \ \ \ \ \ \ \ \ \ \ \ \ \ \ \ \ 
\ \ \ \ \ \ \ \ \ \ \ \ \ \ \ \ \ \ \ \ \ \ \ \ \ \ \ \ \ \ \ \ \ \ \ \ 
\hbox{ in }\left(\frac{r_\e-M_\e}{\gamma_\e},\frac{1-M_\e}{\gamma_\e}\right),\\
z_\e(r)\le0,\ z_\e(0)=z_\e'(0)=0,\\
z_\e\left(\frac{r_\e-M_\e}{\gamma_\e}\right)=z_\e\left(\frac{1-M_\e}{\gamma_\e}\right)=-2v_\e^2(M_\e)\rightarrow-\infty \ (\e\to0).
\end{cases}
\end{equation}
Note that $\gamma_\e\to0$ as $\e\to0$. Actually, multiplying \eqref{c2} by $v_\e r$ and integrating over $(0,1)$, we get 
\[
\begin{split}
&\int_{0}^1(v_\e')^2rdr=\lambda r_{i,\e}^2\int_0^1v_\e^2e^{v_\e^2+v_\e^{1+\e}}rdr\\
&\le\lambda r_{i,\e}^2e^{v_\e^2(M_\e)+v_\e^{1+\e}(M_\e)}\int_0^1v_\e^2rdr\\
&\hbox{(applying the Poincare inequality )}\le  \frac{\lambda}{\lambda_1}r_{i,\e}^2e^{v_\e^2(M_\e)+v_\e^{1+\e}(M_\e)}\int_{0}^1(v_\e')^2rdr.
\end{split}
\]
This shows 
\[r_{i,\e}^2e^{v_\e^2(M_\e)+v_\e^{1+\e}(M_\e)} \ge C>0
\]
for some constant $C>0$ and small $\e>0$ . Then noting our choice of $\gamma_\e$ and  Lemma \ref{lem:infty}, we prove the claim. Moreover we clearly have that $\lim_{\e\to0}\frac{1-M_\e}{\gamma_\e}\rightarrow \infty$, $\lim_{\e\to0}\frac{M_\e-r_\e}{\gamma_\e}=l\in[0,\infty]$ and $\lim_{\e\to0}\frac{M_\e}{\gamma_\e}=m\in [l,\infty]$. Now let us show that for any compact subset $K\subset\subset (-l,\infty)$ ($[0,\infty)$ if $l=0$), there exists a constant $C>0$ which is independent of $\e$ such that
\[
\|z_\e\|_{C^1(K)}\le C. 
\]
Indeed, from \eqref{b1}, we get that $-z_\e''-\frac1{\frac{M_\e}{\gamma_\e}+r}z_\e'\le1$. First assume $l>0$ and choose any $K\subset\subset (-l,0]$. We may suppose  $K\subset\subset (\frac{r_\e-M_\e}{\gamma_\e},0]$ for small $\e>0$. Define $a=\min K< 0$ and set $C_\e=\frac{M_\e}{\gamma_\e}$. Then, for any $r\in K$, we derive,
\[
-\left[z_\e'(r)(C_\e+r)\right]'\le C_\e+r.
\]
Integrating between $r$ and $0$ we obtain
\[
z_\e'(r)(C_\e+r)\le-\left(C_\e r+\frac12r^2\right)
\]
Since $C_\e+r>0$ for small $\e>0$, we show
\[
z_\e'(r)\le-\frac{C_\e r+\frac12r^2}{C_\e+r}\text{ and thus, }
z_\e(r)\ge\int_r^0\frac{C_\e s+\frac12s^2}{C_\e+s}ds
\]
for small $\e>0$. If we set $G_\e(s)=\frac{C_\e s+\frac12s^2}{C_\e+s}$, we get that $G_\e'(s)\ge0$ for all $s\in K$. So we find that $G_\e(s)\ge G_\e(a)$ for all $s\in K$. Now, if $C_\e\to \infty$ as $\e\to 0$, we get  $G_\e(a)\ge -2|a|$ for small $\e>0$. If $C_\e$ is bounded, we get a constant $c_0>0$ such that $G_\e(a)\ge -c_0$ for small $\e>0$. This implies that there exists a constant $c_1>0$ such that
\begin{equation}
z_\e'(r)\le c_1\text{ and thus,\  }z_\e\ge c_1a \text{ on }K,
\end{equation}
for all small $\e$. Hence we have a constant $C>0$ such that $\|z_\e\|_{C^1(K)}\le C$  uniformly for small $\e>0$. On the other hand, for any compact subset $K\subset\subset [0,\infty)$, repeating the same argument as above,  we get the desired uniform bound for $\|z_\e\|_{C^1(K)}$. This proves the claim. Consequently, we may pass to the limit in the equation \eqref{b1}. Now let us discuss the "limit domain". We have three possibilities,
\begin{itemize}
\item[$1.$]$\frac{r_\e-M_\e}{\gamma_\e}\rightarrow-\infty$,
\item[$2.$]$\frac{r_\e-M_\e}{\gamma_\e}\rightarrow -l<0$.
\item[$3.$]$\frac{r_\e-M_\e}{\gamma_\e}\rightarrow0$,
\end{itemize}
We will show that only case $3$ occurs.
\vskip0.2cm
{\bf Case 1: $\frac{r_\e-M_\e}{\gamma_\e}\rightarrow-\infty$ cannot occur}\\
First we note that in this case we have that  $\frac{M_\e}{\gamma_\e}\rightarrow+\infty$. Then, passing to the limit in \eqref{b1}, we get that there exists a function $z$ which satisfies $z_\e\to z$ in $C^2_{\text{loc}}(\R)$ and
\begin{equation}
\begin{cases}
-z''=e^z\qquad\hbox{ in }\R,\\
z(0)=z'(0)=0.
\end{cases}
\end{equation}
Hence $z(s)=\log\frac{4e^{\sqrt2s}}{\left(1+e^{\sqrt2s}\right)^2}$.  So we have that
\begin{equation*}\begin{split}
&\int_{r_\e}^1|v_\e'|^2rdr=\lambda r_{i,\e}^2\int_{r_\e}^1 v_\e^2 e^{v_\e^2+v_\e^{1+\e}}rdr\\
&=\lambda r_{i,\e}^2\gamma_\e e^{v_\e^2(M_\e)+v_\e^{1+\e}(M_\e)}v_\e^2(M_\e)\\
&\times\int_{\frac{r_\e-M_\e}{\gamma_\e}}^{\frac{1-M_\e}{\gamma_\e}}\left(\frac{z_\e(r)}{2v_\e^2(M_\e)}+1\right)^2 e^{z_\e(r)\left(\frac{z_\e(r)}{4v_\e^2(M_\e)}+1\right)+v_\e^{1+\e}(M_\e)\left(\left|\frac{z_\e(r)}{2v_\e^2(M_\e)}+1\right|^{1+\e}-1\right)}(M_\e+\gamma_\e r)dr\\
&\ge\lambda r_{i,\e}^2\gamma_\e M_\e e^{v_\e^2(M_\e)+v_\e^{1+\e}(M_\e)}v_\e^2(M_\e)\\&\times\int_{0}^{\frac{1-M_\e}{\gamma_\e}}\left(\frac{z_\e(r)}{2v_\e^2(M_\e)}+1\right)^2 e^{z_\e(r)\left(\frac{z_\e(r)}{4v_\e^2(M_\e)}+1\right)+v_\e^{1+\e}(M_\e)\left(\left|\frac{z_\e(r)}{2v_\e^2(M_\e)}+1\right|^{1+\e}-1\right)}dr\\
&=\frac{M_\e }{2\gamma_\e}
\int_{0}^{\frac{1-M_\e}{\gamma_\e}}\left(\frac{z_\e(r)}{2v_\e^2(M_\e)}+1\right)^2 e^{z_\e(r)\left(\frac{z_\e(r)}{4v_\e^2(M_\e)}+1\right)+v_\e^{1+\e}(M_\e)\left(\left|\frac{z_\e(r)}{2v_\e^2(M_\e)}+1\right|^{1+\e}-1\right)}dr.
\end{split}\end{equation*}
Here Fatou's lemma implies that 
\[
\begin{split}
\liminf_{\e\to0}\int_{0}^{\frac{1-M_\e}{\gamma_\e}}\left(\frac{z_\e(r)}{2v_\e(M_\e)}+1\right)^2 e^{z_\e(r)\left(\frac{z_\e(r)}{4v_\e^2(M_\e)}+1\right)+v_\e^{1+\e}(M_\e)\left(\left|\frac{z_\e(r)}{2v_\e^2(M_\e)}+1\right|^{1+\e}-1\right)}dr\\\ge\int_{0}^{+\infty}e^{z(s)}dr>0.
\end{split}
\]
Therefore by Lemma \ref{lem:bdd}, we deduce a contradiction since  $M_\e/\gamma_\e\to \infty$ as $\e\to0$. This ends Case 1.
\vskip0.2cm
{\bf Case 2: $\frac{r_\e-M_\e}{\gamma_\e}\rightarrow-l<0$ cannot occur}\\
Noting $m:=\lim\limits_{\e\rightarrow0}\frac{M_\e}{\gamma_\e}$ and $m\ge l$, we get, passing to the limit in \eqref{b1}, that the weak limit $z$ satisfies
\begin{equation*}
\begin{cases}
-z''-\frac1{m+r}z'=
e^{z}\qquad\hbox{ in }\left(-l,+\infty\right)\\
z(r)\le0,\ z(0)=z'(0)=0.
\end{cases}
\end{equation*}
Then, setting $Z(s)=z(s-m)$ we derive that $Z$ satisfies
\begin{equation*}
\begin{cases}
-Z''-\frac1rZ'=
e^Z\qquad\hbox{ in }\left(m-l,+\infty\right)\\
Z(r)\le0,\ Z(m)=Z'(m)=0.\\
\end{cases}
\end{equation*}
This Cauchy problem admits the unique solution (see \cite{GGP})
\begin{equation*}\label{b2}
Z(s)=\log\frac{4\alpha^2m^{\alpha+2}s^{\alpha-2}}
{\left((\alpha+2)m^\alpha+(\alpha-2)s^\alpha\right)^2}
\end{equation*}
where $\alpha=\sqrt{2m^2+4}$. 
Let us show $m=l$. To this end, we can proceed as in Lemma 3.5 in \cite{GGP}. For the sake of the completeness, we sketch it. We shall show that $z_{\e}((r_{\e}-M_{\e})/\gamma_\e)\to -\infty$ implies that $m=l$. Indeed, arguing as above, we have that for any $r\in [(r_\e-M_\e)/\gamma_\e,0]$,
$$z_\e'(r)\left(\frac{M_\e}{\gamma_\e}+r\right)\le-\left(\frac{M_\e}{\gamma_\e} r+\frac12r^2\right).$$
If by contradiction we have that $m>l$, we deduce that $\frac{M_\e}{\gamma_\e}+r\ge m-l+o(1)$ where $o(1)\to 0$ as $\e\to0$ and then we get that
$$z_\e'(r)\le C\hbox{ in }[(r_\e-M_\e)/\gamma_\e,0]$$
for a constant $C>0$ which is independent of small $\e>0$. On the other hand, by the mean value theorem, since $z_{\e}((r_{\e}-M_{\e})/\gamma_\e)\to -\infty$ and $z_{\e}(0)=0$ we deduce the existence of $\xi_\e\in\left(\frac{r_{\e}-M_{\e}}{\gamma_\e},0\right)$ such that $z_\e'(\xi_\e)\to -\infty$ which gives a contradiction. So $m=l$. Now, from Lemmas \ref{lem:energy3}, \ref{lem:F} and the blow up procedure as above, we get
\[
\begin{split}
2&= \lambda r_{i,\e}^2\int_{r_\e}^{1}v_{\e}^2e^{v_{\e}^{2}+v_{\e}^{1+\e}}rdr+o(1)\\
 &= \frac{1}{2}\int_{\frac{r_\e-M_\e}{\gamma_\e}}^{\frac{1-M_\e}{\gamma_\e}}\left(\frac{z_\e(r)}{2v_\e^2(M_\e)}+1\right)^2 \times\\
&\ \ \ \ \ \ \ \ \ \ \ \ e^{z_\e(r)\left(\frac{z_\e(r)}{4v_\e^2(M_\e)}+1\right)+v_\e^{1+\e}(M_\e)\left(\left|\frac{z_\e(r)}{2v_\e^2(M_\e)}+1\right|^{1+\e}-1\right)}\left(\frac{M_{\e}}{\gamma_\e}+r\right)dr+o(1),
\end{split}
\]
where $o(1)\to0$ as $\e\to0$. Then using $m=l>0$ and Fatou's Lemma, we obtain
\[
2\ge \frac{1}{2}\int_0^{\infty}e^{Z(s)}sds=\sqrt{2m^2+4}>2,
\]
a contradiction. This finishes Case 2.
\vskip0.2cm

\textbf{Case 3: $\frac{r_{\e}-M_{\e}}{\gamma_\e}\to 0$ occurs.}\\
Repeating the procedure in Case 2 we can show $m=l=0$. As a consequence, we deduce 
\[
z_\e\to z\text{ in }C_{\text{loc}}([0,\infty))\cap C_{\text{loc}}^2((0,\infty))
\]
and then, $z$ satisfies 
\begin{equation*}
\begin{cases}
-z''-\frac1{r}z'=
e^{z}\qquad\hbox{ in }\left(0,+\infty\right)\\
z(r)\le0,\ z(0)=0.
\end{cases}
\end{equation*}
The previous equation can be integrate giving the solutions (see\cite{GGP}, p. 744-745)
\begin{equation}\label{C}
z(r)=\log\left(\frac4{\delta^2}\frac{e^{\sqrt2\frac{\log r-y}\delta}}{\left(1+e^{\sqrt2\frac{\log r-y}\delta}\right)^2}
\right)-2\log r
\end{equation}
for some constants $\delta\ne0,y\in\R$. Moreover a direct calculation shows
\[
z(r)=2\log{\frac{2}{\delta}}-\frac{\sqrt{2}}{\delta}y+\left(\frac{\sqrt{2}}{\delta}-2\right)\log{r}-2\log{\left(1+e^{\sqrt2\frac{\log r-y}\delta}\right)}.
\]
Since $z(0)=0$, we must have $\delta=1/\sqrt{2}$. Then we clearly deduce $y=\log{2\sqrt{2}}$. 
This completes the proof.
\end{proof}
\begin{proof}[Proof of Proposition \ref{prop:annulus}] The proposition follows from Lemmas \ref{lem:energy3}, \ref{lem:F} and \ref{lem:4-step3}. 
\end{proof}
%
%
%
%
\begin{remark}\label{R}
If we consider a radial nodal solution $u_p$ to the problem
\begin{equation}
\begin{cases}
-\Delta u=|u|^{p-1}u&\text{ in }B\\
u=0&\text{ on }\partial B,
\end{cases}
\end{equation}
then in Proposition 3.1 of \cite{GGP} it was proved that case $2$ occurs for some suitable $m<0$. This shows that the shape of the nonlinearity plays a crucial role.
\end{remark}

\section{Behavior of $u_\e$ in $B\setminus B_{r_{k,\e}}$}\label{sec:outer}
Next  we show the behavior on $B\setminus B_{r_{k,\e}}$. We set $u_{k+1,\e}:=u_{\e}|_{B\setminus B_{r_{k,\e}}}\in H^1_0(B)$ by zero extension. Then we have the following
\begin{proposition}\label{prop:outer} We get
\[
u_{k+1,\e}\to u_0\text{ in }H^1_0(B),
\]
as $\e\to0$ where $u_0$ is the least energy solution of \eqref{i9}. 
\end{proposition}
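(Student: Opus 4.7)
The plan is to extract a weak limit from the bounded sequence $(u_{k+1,\e})$, show the limit solves \eqref{i9} with the right sign, upgrade to strong convergence in $H^1_0(B)$ via the Moser--Trudinger compactness below the critical level $2\pi$, and identify the limit with $u_0$ (up to the sign $(-1)^k$, in accordance with Theorem \ref{i10}) through the least energy characterization.

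First, Lemma \ref{lem:bdd} gives $K\le \|u_{k+1,\e}\|^2\le K'$, so up to subsequences $u_{k+1,\e}\rightharpoonup \tilde u$ weakly in $H^1_0(B)$, strongly in every $L^p(B)$, and a.e.\ on $B$; the sign condition $(-1)^k u_{k+1,\e}\ge 0$ is inherited by $\tilde u$. Since $r_{k,\e}\to 0$ by Lemma \ref{lem:kthzero}, the annulus $B\setminus B_{r_{k,\e}}$ exhausts $B$, so I can test \eqref{i11} against any $\varphi\in C^\infty_c(B\setminus\{0\})$: the Dirichlet term passes to the limit by weak convergence, and the nonlinear term via Lemma \ref{lem:F} together with the uniform $H^1$--bound (which controls $f_\e(u_{k+1,\e})u_{k+1,\e}$ in $L^1$). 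Radial symmetry, elliptic regularity, and a standard removable singularity argument at $0$ then promote $\tilde u$ to a classical solution of \eqref{i9} carrying the sign $(-1)^k$.

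Nontriviality of $\tilde u$ is forced by Lemma \ref{lem:energy5}: we have $I_\e(u_{k+1,\e})\to I_0(u_0)>0$, and $\tilde u\equiv 0$ would contradict the uniform lower bound $\|u_{k+1,\e}\|^2\ge K$ from Lemma \ref{lem:bdd}. The decisive step is strong $H^1_0$--convergence: since $I_0(u_0)\in(0,2\pi)$, the energy stays strictly below the Moser--Trudinger compactness threshold, so Lemma \ref{lem:cpt} applies and yields $u_{k+1,\e}\to \tilde u$ in $H^1_0(B)$. In particular $\tilde u\in\mathcal{N}_0$ and $I_0(\tilde u)=I_0(u_0)$; the minimality definition of $u_0$ among positive radial solutions then forces $\tilde u=(-1)^k u_0$, which is the desired conclusion.

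The main obstacle I expect is the strong convergence step: the Moser--Trudinger nonlinearity is critical, so weak convergence alone does not let one pass to the limit in the exponential term, and concentration has to be excluded by a quantitative energy bound. The clean input that makes this work here is the sharp inequality $I_0(u_0)<2\pi$ built into the definition of $u_0$, which is exactly what activates Lemma \ref{lem:cpt} and closes the argument.
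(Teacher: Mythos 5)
Your overall strategy (weak limit, identification of the limit equation, strong convergence from the subcritical energy level, identification via least energy) matches the paper's, but the step you call decisive does not go through as written. Lemma \ref{lem:cpt} is a Palais--Smale--type compactness statement: it requires $J_n'(u_n)\to 0$ in $H^{-1}(B)$. The function $u_{k+1,\e}$ is the restriction of $u_\e$ to the annulus $B\setminus B_{r_{k,\e}}$ extended by zero; it solves the equation only on that annulus, and as an element of $H^1_0(B)$ it is \emph{not} an approximate critical point of $I_\e$: testing against $\varphi$ produces the distributional contribution of the jump of the normal derivative across $\partial B_{r_{k,\e}}$, i.e.\ a term of order $r_{k,\e}u_\e'(r_{k,\e})\cdot\varphi(r_{k,\e})$, and no bound on $u_\e'(r_{k,\e})$ is available at this stage (the paper's Remark \ref{rem:outer2}, which controls $r_{k,\e}u_\e'(r_{k,\e})$, is proved \emph{after} and \emph{using} Proposition \ref{prop:outer}). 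The paper explicitly flags this: ``Although we do not ensure $\lim_{\e\to0}I_{\e}'(u_{k+1,\e})=0$\dots''. Its actual route is different: it proves a uniform $L^q$ bound ($q>1$) on $f_\e(u_{k+1,\e})$ by combining the energy gap $2\lim I_\e(u_{k+1,\e})=4\pi(1-\delta)$ with Lions' concentration--compactness inequality $\int_B e^{4\pi p v_\e^2}\le C$ for $p<1/(1-\|v_0\|^2)$, and then obtains $\|u_{k+1,\e}\|^2\to\|u_0\|^2$ by testing against $u_{k+1,\e}-u_{0,\e}$ and using H\"older together with the strong $L^{q'}$ convergence. You would need to either verify the Palais--Smale condition for the truncated functions (which requires information you do not yet have) or reproduce this concentration--compactness argument.

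A secondary gap: your nontriviality argument is not valid as stated. Weak convergence $u_{k+1,\e}\rightharpoonup 0$ is perfectly compatible with the uniform lower bound $\|u_{k+1,\e}\|^2\ge K$ (that is exactly what concentration looks like), so ``$\tilde u\equiv 0$ would contradict Lemma \ref{lem:bdd}'' does not follow. The paper's proof that the weak limit is nonzero is itself nontrivial: if $u_0=0$ then $\int_B F_\e(u_{k+1,\e})\to 0$, hence $\lim\|u_{k+1,\e}\|^2=2\lim I_\e(u_{k+1,\e})<4\pi$, which via Moser--Trudinger gives an $L^q$ bound on $f_\e(u_{k+1,\e})$ and then, by H\"older and the strong $L^{q'}$ convergence to $0$, forces $\|u_{k+1,\e}\|\to 0$, contradicting the lower bound. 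So the same concentration--compactness machinery is needed already here.
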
 
First observe that we have already proved 
\[
0<\lim_{\e\to0}I_\e(u_{k+1,\e})= \inf_{u\in \mathcal{N}_0}I_0(u)<2\pi
\]
by Lemma \ref{lem:energy5}. This means that the energy of $u_{k+1,\e}$  belongs to the suitable compactness region for Palais-Smale sequences \cite{A}. Although we do not ensure $\lim_{\e\to0}I_{\e}'(u_{k+1,\e})=0$,  we can accomplish the proof by the argument based Lions' concentration compactness result \cite{L}. We refer the proof in \cite{A} (and also \cite{FMR}).
\begin{proof}[Proof of Proposition \ref{prop:outer}]
Since $u_\e$ is bounded, we can assume, by choosing a sequence if necessary, that there exists a function $u_0\in H^1_0(\Omega)$ such that 
\begin{equation}
\begin{split}
&u_{\e}\rightharpoonup u_0 \text{ weakly in } H^1_0(B), \\
&u_{\e}\to u_0\text{ in }L^p(B)\text{ for all }p\ge1,\\
&u_{\e}\to u_0\text{ a.e. on }B
\end{split}\label{eq:1}
\end{equation}
as $\e\to 0$. Then, since $u_{i,\e}\rightharpoonup 0$ weakly in $H^1_0(B)$ for all $i=1,2,\cdots,k$, we also have
\begin{equation}
\begin{split}
&u_{k+1,\e}\rightharpoonup u_0 \text{ weakly in } H^1_0(B), \\
&u_{k+1,\e}\to u_0\text{ in }L^p(B)\text{ for all }p\ge1,\\
&u_{k+1,\e}\to u_0\text{ a.e. on }B,
\end{split}\label{eq:2}
\end{equation}
as $\e\to 0$. Furthermore, since $\langle I_\e'(u_{\e}),u_{\e}\rangle=0$, we get $\int_Bf_{\e}(u_{\e})u_{\e}dx$ is bounded. Then Lemma A.1 implies $f_{\e}(u_{\e})\to f_0(u_0)$ in $L^1(B)$. We claim that $u_0$ is a nonnegative weak solution of \eqref{P} with $\e=0$. In fact, for all $\psi\in C^{\infty}_0(B)$, we get by the weak convergence of $u_\e$ and $L^1(B)$ convergence of $f_{\e}(u_\e)$,  
\[
\begin{split}
0&=\lim_{n\to \infty}\left\{\int_B\nabla u_\e\nabla \psi dx-\int_Bf_{\e}(u_\e)\psi dx\right\}\\&=\int_B\nabla u_0\nabla \psi dx-\int_Bf_{0}(u_0)\psi dx.
\end{split}
\] 
By a density argument we prove the claim. Next  we shall show that there exists a constant $q>1$ such that 
\begin{equation}
\int_B|f_{\e}(u_{k+1,\e})|^qdx\text{ is bounded}.\label{eq:3}
\end{equation}
To see this, we observe that for a constant  $\beta>1$, which will be determined later, there exists  $C>0$ such that $|f_{\e}(t)|\le Ce^{\beta t^{2}}$ for all $t\in \mathbb{R}$ and small $\e>0$. 
Then for $q>1$, which will be also chosen later, we get
\[
\int_B|f_\e(u_{k+1,\e})|^qdx\le C\int_B e^{q\beta u_{k+1,\e}^2}dx=C\int_B e^{q\beta \|u_{k+1,\e}\|^2v_\e^2}dx
\]
where we set $v_\e:=u_{k+1,\e}/\|u_{k+1,\e}\|$. Notice $\|v_\e\|=1$ and $v_\e\rightharpoonup v_0$ weakly in $H^1_0(B)$ for a function $v_0$ with $0\le \|v_0\|\le1$. We claim that $v_0\not=0$. If on the contrary $v_0=0$ we get $u_0=0$. Then Lemma A.1 shows $\int_BF_{\e}(u_{k+1,\e})dx\to0$ as $\e\to0$. It follows that 
\begin{equation}0<2\lim_{\e\to0}I_\e(u_{k+1,\e})=\lim_{\e\to0} \|u_{k+1,\e}\|^2<4\pi.\label{eq:4}
\end{equation} 
Consequently we can choose $\beta,q>1$ so that  
\[
\int_B|f_\e(u_{k+1,\e})|^qdx\le C\int_B e^{q\beta \|u_{k+1,\e}\|^2v_\e^2}dx\le C\int_B e^{4\pi v_\e^2}dx
\] 
for small $\e>0$. Notice that the Trudinger Moser inequality implies that the right hand side is bounded uniformly for small $\e>0$. Now setting $q'>1$ so that $1/q+1/q'=1$, we get by the H\"{o}lder inequality that 
\[
\begin{split}
\|u_{k+1,\e}\|^2&=\int_Bu_{k+1,\e}f_\e(u_{k+1,\e})dx\\&\le \left(\int_B|u_{k+1,\e}|^{q'}dx\right)^{\frac{1}{q'}}\left(\int_B|f_\e(u_{k+1,\e})|^qdx\right)^{\frac1q}\\&\le C\left(\int_B|u_{k+1,\e}|^{q'}dx\right)^{\frac{1}{q'}}.
\end{split}
\]
for a constant $C>0$ if $\e>0$ is small enough. Hence, we get $u_{k+1,\e}\to 0$ in $H^1_0(B)$ by \eqref{eq:2}. This contradicts  \eqref{eq:4}. Therefore, we can assume $0<\|v_0\|<1$. (If $\|v_0\|=1$, we finish the proof.) Then Lions' concentration compactness lemma (Theorem I.6 in \cite{L}) proves 
\begin{equation}
\int_B e^{4\pi p v_\e^2}dx \text{ is bounded for all }p<\frac{1}{1-\|v_0\|^2}.\label{eq:L}
\end{equation}
Now recalling the facts that $\lim_{\e\to0}I_\e(u_{k+1,\e})<2\pi$, $f_{0}(t)t-2F_{0}(t)\ge 0$ for all $t\in \mathbb{R}$ 
 and $\langle I_{0}'(u_0),u_0\rangle=0$, we get a constant $\delta\in (0,1)$ such that 
\[
\begin{split}
4\pi(1-\delta)&=2\lim_{\e\to0}I_\e(u_{k+1,\e})\\&=\lim_{\e\to 0}\|u_{k+1,\e}\|^2-2\int_BF_{0}(u_0)dx-\langle I_{0}'(u_0),u_0\rangle\\&\ge  \lim_{\e\to 0}\|u_{k+1,\e}\|^2-\|u_0\|^2\\&=\lim_{\e\to 0}\|u_{k+1,\e}\|^2(1-\|v_0\|^2).
\end{split}
\]
This shows
\[
q\beta \lim_{\e\to 0}\|u_{k+1,\e}\|^2\le\frac{4\pi q\beta(1-\delta)}{1-\|v_0\|^2}.\]
Put $p:=q\beta(1-\delta)/(1-\|v_0\|^2)$. Then, we can choose $\beta,q>1$ so that $p<1/(1-\|v_0\|^2)$ and
\[
\int_B|f_\e(u_{k+1,\e})|^qdx\le C\int_B e^{4\pi p v_\e^2}dx,
\]
for small $\e>0$. Then \eqref{eq:L} proves \eqref{eq:3}. Now choose $u_{0,\e}\in H_0^1(B)$ such that $u_{0,\e}=0$ on $\overline{B_{r_{k,\e}}}$ and $u_{0,\e}\to u_0$ in $H^1_0(B)$ as $\e\to0$. (Define, for example, $u_{0,\e}:=\phi_{r_{k,\e},1}u_0$ where $\phi_{r_{k,\e},1}$ is a cut off function defined as in Section 2.) Then integration by parts gives that
\[
\begin{split}
\int_B \nabla u_{k+1,\e}\nabla(u_{k+1,\e}-u_{0,\e})dx&=\int_{B\setminus B_{r_{k,\e}}} (-\Delta u_{k+1,\e})(u_{k+1,\e}-u_{0,\e})dx\\&=\int_Bf_{\e}(u_{k+1,\e})(u_{k+1,\e}-u_{0,\e})dx.
\end{split}
\]
Now again let $q'>1$ be a constant such that $q^{-1}+q'^{-1}=1$. Then setting $o(1)\to0$ as $\e\to0$, and  using the H\"{o}lder inequality, \eqref{eq:3}, and \eqref{eq:2},  we get 
\[
\begin{split}
\|u_{k+1,\e}\|^2-\|u_0\|^2&=\int_{B}\nabla u_{k+1,\e}\nabla (u_{k+1,\e}-u_{0,\e})dx+o(1)\\&=\int_Bf_{\e}(u_{k+1,\e})(u_{k+1,\e}-u_{0,\e})dx+o(1)\\
                                                &\le \left(\int_B|f_{\e}(u_{k+1,\e})|^qdx\right)^{\frac{1}{q}}\left(\int_B|u_{k+1,\e}-u_{0,\e}|^{q'}dx\right)^{\frac{1}{q'}}+o(1)\\&\to0
\end{split}
\]
as $\e\to \infty$. Hence we get $u_{k+1,\e}\to u_0$ in $H^1_0(B)$ as $\e\to 0$. Finally, Lemma \ref{lem:energy5} proves that $u_0$ is the least energy solution of \eqref{i9}. This completes the proof.
\end{proof}
\begin{remark}\label{rem:outer} From the result above, we get $\|u_{k+1,\e}\|_{L^{\infty}((r_{k,\e},1))}$ is bounded. To see this, observe that the strong convergence of $u_{k+1,\e}$ implies that for all $q>1$,  $e^{u_{k+1,\e}^2}$ is bounded in $L^q(B)$ uniformly for small $\e>0$. Set $r_{k+1,\e}^*\in (r_{k,\e},1)$ so that $u_{r_{k+1},\e}(r_{k+1,\e}^*)=\|u_{k+1,\e}\|_{L^{\infty}((r_{k,\e},1))}$. Then we get
\[
\begin{split}
|u_{k+1,\e}(r_{k+1,\e}^*)|&=\left|\int_{r_{k+1,\e}^*}^1f_\e(u_{k+1,\e})r\log{r}dr\right|\\&\le \left(\int_{r_{k+1,\e}^*}^1f_\e(u_{k+1,\e})^2rdr\right)^{\frac12}\left(\int_{r_{k+1,\e}^*}^1r\log^2{r}dr\right)^{\frac12}\\
&\le C
\end{split}
\]
for a constant $C>0$ if $\e>0$ is sufficiently small. This proves the claim.
\end{remark}
\begin{remark}\label{rem:outer2} The previous remark shows
\[
\lim_{\e\to0}r_{k,\e}u_{\e}'(r_{k,\e})=0.
\]
To show this, set $r_{k+1,\e}^*\in (r_{k,\e},1)$ as above. First observe that $r_{k+1,\e}^*\to 0$ as $\e\to0$. If not, we have a constant $r_0\in(0,1)$ such that $r_{k+1,\e}^*\to r_0$ as $\e\to0$ by choosing a sequence if necessary. Then since $u_{k+1,\e}\to u_0$ a.e. on $B$, we have $u_0(r)\le u_0(r_0)$ for a.a. $r\in(0,r_0)$. But, since $u_0$ is a positive radial solution of \eqref{i9}, the result in \cite{GNN} shows  $u'(r)<0$ for all $r\in (0,1)$. This is a contradiction. Finally,  integrating \eqref{i11} over $(r_{k,\e},r_{k+1,\e}^*)$, we get by the previous remark that
\[
|r_{k,\e}u_{\e}'(r_{k,\e})|=\left|\int_{r_{k,\e}}^{r_{k+1,\e}^*}f_{\e}(u_{\e})rdr\right|\le C r_{k+1,\e}^*.
\]
for some constant $C>0$. This completes the proof.
\end{remark}

\section{Proof of main theorems}\label{sec:proof}
We finally conclude the proof of our main theorems. 
\begin{proof}[Proof of Theorem \ref{i10}] The proof of \eqref{i98} is given in Lemma \ref{lem:kthzero},  \eqref{i97} is shown in
Corollary \ref{cor:ball}, Proposition \ref{prop:annulus} and Proposition  \ref{prop:outer}, \eqref{i96} is shown in
Corollary \ref{cor:ball}, Proposition \ref{prop:annulus} and Proposition  \ref{prop:outer}. So we have only to show  \eqref{i99}, i.e.
\[
u_\e\to (-1)^ku_0\text{ in }C^2_{\text{loc}}((0,1]),
\]
as $\e\to0$. To prove this, we may assume that  $u_{\e}$ satisfies 
\begin{equation}
\begin{cases}
-u_{\e}''-\frac{1}{r}u_{\e}'=\lambda u_{\e}e^{u_{\e}^2+u_{\e}^{1+\e}} ,\ u_{\e}>0\ \text{ in }(r_{k,\e},1),\\
u_{\e}(r_{k,\e})=u_{\e}(1)=0.\label{2}
\end{cases}
\end{equation}
Now choose any compact subset $K\subset\subset (0,1]$. For all $r\in K$, we may suppose $r_{k,\e}<r$ by Lemma \ref{lem:kthzero}. Then multiplying \eqref{2} by $r$ and integrating over $(r_{k,\e},r)$ we have
\[
ru_\e'(r)=r_{k,\e}u_\e'(r_{k,\e})-\int_{r_{k,\e}}^rf_\e(u_\e)rdr.
\]
Then Remark \ref{rem:outer2} and Lemma \ref{lem:F} prove that $ru_{\e}'$ is bounded uniformly on $K$ for small $\e$. In particular, $u_\e'$ is bounded uniformly on $K$  for small $\e$. Furthermore, since  for any $r\in K$, we have
\[
u_\e(r)=-\int_{r}^1u_\e'(s)ds,
\]
we derive $\|u_\e\|_{C^1(K)}$ is bounded uniformly for small $\e$. Then the Arzela-Ascoli theorem ensures $u_\e\to u_0$ uniformly on $K$ as $\e\to0$. Finally, using \eqref{2}, we show $u_\e\to u_0$ in $C^2(K)$ as $\e\to0$. This finishes the proof.
\end{proof}

\begin{proof}[Proof of Theorem \ref{i12}] For $i=0$ the proof is given in Proposition \ref{prop:ball}. The case $i=1,..,k$ is considered in Proposition \ref{prop:annulus}.
\end{proof}

Using the blow up results above, we get the following remark.
\begin{remark}\label{rem} We have
\begin{equation}
\lim_{\e\to0}\frac{\|u_{\e}\|_{L^{\infty}((r_{i,\e},r_{i+1,\e}))}}{\|u_{\e}\|_{L^{\infty}((r_{i-1,\e},r_{i,\e}))}}=0\label{eq:remark}
\end{equation}
for all $i=1,2,\cdots,k$. Let us show the proof. For $i=k$, the proof is obvious by Corollary \ref{cor:ball}, Lemma \ref{lem:infty} and Remark \ref{rem:outer}. Then for $i=1,2,\cdots,k-1$, set $M_{i,\e}\in[0,1)$ so that $|u_\e(M_{i,\e}r_{i,\e})|=\|u_{\e}\|_{L^{\infty}((r_{i-1,\e},r_{i,\e}))}$, $r_{i,\e}^*:=M_{i,\e}r_{i,\e}$ and $\gamma_{i,\e}=2u_\e(r_{i,\e}^*)f_{\e}(u_\e(r_{i,\e}^*))>0$. Then integrating \eqref{i11} over $(r_{i,\e}^*,r_{i+1,\e}^*)$ shows
\[
\int_{r_{i,\e}^*}^{r_{i,\e}}f_{\e}(u_\e)rdr=-\int_{r_{i,\e}}^{r_{i+1,\e}^*}f_{\e}(u_\e)rdr.
\]
Hence putting $v_{i,\e}(r)=|u_{i,\e}(r_{i,\e}r)|$, $v_{i+1,\e}(r)=|u_{i+1,\e}(r_{i+1,\e}r)|$ and $z_{j,\e}(r)=2|u_{\e}(r_{j,\e}^*)|(|v_{j,\e}(\gamma_{j,\e}r+M_{j,\e})|-|u_{\e}(r_{j,\e}^*))|)$ for $j=i,i+1$, we get by the blow up procedure as above,
\[
\begin{split}
&\frac{1}{2|u_{\e}(r_{i,\e}^*)|}\times
\\&\int_{0}^{\frac{1-M_{i,\e}}{\gamma_{i,\e}}}\left(\frac{z_{i,\e}(r)}{2v_{i,\e}^2(M_{i,\e})}+1\right)\times\\
&\ \ \ \ \ \ \ \ \ \ \ \ \ \ \ \ e^{z_{i,\e}(r)\left(\frac{z_{i,\e}(r)}{4v_{i,\e}^2(M_{i,\e})}+1\right)+v_{i,\e}^{1+\e}(M_{i,\e})\left(\left|\frac{z_{i,\e}(r)}{2v_{i,\e}^2(M_{i,\e})}+1\right|^{1+\e}-1\right)}\left(\frac{M_{i,\e}}{\gamma_{i,\e}}+r\right)dr\\
&=\frac{1}{2|u_\e(r_{i+1,\e}^*)|}\times\\
&\int_{\frac{\frac{r_{i,\e}}{r_{i+1,\e}}-M_{i+1,\e}}{\gamma_{i+1,\e}}}^{0}\left(\frac{z_{i+1,\e}(r)}{2v_{i+1,\e}^2(M_{i+1,\e})}+1\right)\times\\
& \ \  e^{z_{i+1,\e}(r)\left(\frac{z_{i+1,\e}(r)}{4v_{i+1,\e}^2(M_\e)}+1\right)+v_{i+1,\e}^{1+\e}(M_{i+1,\e})\left(\left|\frac{z_{i+1,\e}(r)}{2v_{i+1,\e}^2(M_{i+1,\e})}+1\right|^{1+\e}-1\right)}\left(\frac{M_{i+1,\e}}{\gamma_{i+1,\e}}+r\right)dr\\
&=o\left(\frac{1}{2|u_\e(r_{i+1,\e}^*)|}\right)
\end{split}
\]
as $\e\to0$ since $\lim_{\e\to0}\frac{r_{i,\e}/r_{i+1,\e}-M_{i+1,\e}}{\gamma_{i+1,\e}}=0$. 
Finally using our blow up results and the Fatou lemma for the integral on the left hand side, we get  the desired conclusion.
\end{remark}
\appendix
\section{Some basic facts}\label{sec:basic}
In the following, let $(\e_n)\subset \R^+$ be any sequence such that $\e_n\to0$ as $n\to \infty$.
\begin{lemma}\label{lem:F} Let $(u_n)\subset H^1_0(B)$ be a bounded sequence such that  $u_n\rightharpoonup u$ weakly in $H^1_0(B)$ and $u_n\to u$ a.e. on $B$ as $n\to \infty$ for a function $u$. Furthermore, assume 
\[
\sup_{n}\int_Bf_{\e_n}(u_n)u_n dx< \infty.\]
Then we have
\[
\lim_{n\to \infty}\int_B f_{\e_n}(|u_n|)dx= \int_B f_{0}(|u|)dx
\]
and
\[
\lim_{n\to \infty}\int_B F_{\e_n}(u_n)dx= \int_B F_{0}(u)dx.
\]
\end{lemma}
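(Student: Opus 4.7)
My plan is to apply the Vitali convergence theorem to both families on the finite measure space $B$. Since $u_n\to u$ a.e.\ and $|u|<\infty$ a.e., together with the elementary fact that $|u_n|^{1+\e_n}\to|u|$ a.e.\ (as $|u_n|^{\e_n}\to 1$ wherever $u\neq 0$ and both sides vanish otherwise), one has the pointwise limits $f_{\e_n}(|u_n|)\to f_0(|u|)$ and $F_{\e_n}(u_n)\to F_0(u)$ a.e.\ on $B$, using that $F_\e$ is an even function of $s$. It remains only to verify uniform integrability of each family.

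For $f_{\e_n}(|u_n|)$, I would fix a measurable $E\subset B$ and any $K\ge 1$, and split
\[
\int_E f_{\e_n}(|u_n|)\,dx=\int_{E\cap\{|u_n|<K\}} f_{\e_n}(|u_n|)\,dx+\int_{E\cap\{|u_n|\ge K\}} f_{\e_n}(|u_n|)\,dx.
\]
On the first piece, for $\e_n\le 1$ the integrand is pointwise bounded by $\lambda K e^{2K^2}$, so the contribution is at most $\lambda K e^{2K^2}|E|$. On the second, the algebraic identity $f_{\e_n}(|u_n|)\cdot|u_n|=f_{\e_n}(u_n)u_n$ combined with $|u_n|\ge K$ gives $f_{\e_n}(|u_n|)\le K^{-1}f_{\e_n}(u_n)u_n$, whence the contribution is at most $K^{-1}\sup_n\int_B f_{\e_n}(u_n)u_n\,dx\le C/K$ by hypothesis. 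Given $\eta>0$, choose $K$ with $C/K<\eta/2$ and then $\delta$ with $\lambda K e^{2K^2}\delta<\eta/2$; this gives equi-absolute continuity, and a uniform $L^1$ bound follows by taking $E=B$. Vitali then yields the first conclusion.

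For the second assertion I would establish the uniform pointwise comparison
\[
F_\e(s)\le C_0\,f_\e(|s|)\qquad(s\in\R,\ \e\in[0,1]),
\]
with $C_0$ independent of $(s,\e)$. The ratio $F_\e(s)/f_\e(|s|)$ behaves like $|s|/2$ near $s=0$; by L'H\^opital (differentiating the numerator gives $|s|e^{s^2+|s|^{1+\e}}$, while the derivative of the denominator grows like $s^2 e^{s^2+|s|^{1+\e}}$) it tends to $0$ as $|s|\to\infty$ uniformly in $\e\in[0,1]$; and being jointly continuous in $(s,\e)$ it admits a finite supremum on $\R\times[0,1]$. Consequently $\int_E F_{\e_n}(u_n)\,dx\le C_0\int_E f_{\e_n}(|u_n|)\,dx$, so the equi-absolute continuity of $f_{\e_n}(|u_n|)$ just proved transfers to $F_{\e_n}(u_n)$, and a second application of Vitali closes the argument.

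The main obstacle is the comparison $F_\e(s)\le C_0 f_\e(|s|)$: the uniformity in $\e$ needs to be checked with care, and this is the one place where the precise shape of the nonlinearity (rather than just the $L^1$ hypothesis) enters the proof.
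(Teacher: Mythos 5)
Your proof is correct and follows essentially the same route as the paper, which gives no argument of its own but defers to Lemma 3.1 in Adimurthi's paper \cite{A1}; the proof there is exactly this truncation into $\{|u_n|<K\}$ and $\{|u_n|\ge K\}$, with the bound $K^{-1}\sup_n\int_B f_{\e_n}(u_n)u_n\,dx$ on the second piece, followed by a Vitali/generalized dominated convergence step. The one soft spot you flag yourself, the uniform comparison $F_\e(s)\le C_0 f_\e(|s|)$, can be settled without L'H\^opital: since $\frac{d}{dt}e^{t^2+t^{1+\e}}=(2t+(1+\e)t^{\e})e^{t^2+t^{1+\e}}\ge 2t\,e^{t^2+t^{1+\e}}$ for $t\ge0$, one gets $F_\e(s)=\lambda\int_0^{|s|}te^{t^2+t^{1+\e}}dt\le\frac{\lambda}{2}e^{s^2+|s|^{1+\e}}\le\frac12 f_\e(|s|)$ for $|s|\ge1$ and all $\e\in[0,1]$, while for $|s|\le1$ the crude bound $F_\e(s)\le\frac{\lambda}{2}s^2e^{2}\le\frac{e^2}{2}f_\e(|s|)$ suffices, so the constant is explicit and the uniformity in $\e$ is immediate.
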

\begin{proof}
Similar to the proof of 4) of Lemma 3.1 in \cite{A1}.
\end{proof}
\begin{lemma}\label{lem:energylb0}
We have
\[
\liminf_{\e\to0}\inf_{u\in \mathcal{N}_\e}I_\e(u)>0.
\]
\end{lemma}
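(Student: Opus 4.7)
The plan is to argue by contradiction: if the conclusion fails, there exist sequences $\e_n\to 0$ and $u_n\in\mathcal{N}_{\e_n}$ with $I_{\e_n}(u_n)\to 0$, and I will derive a contradiction in two stages --- first show that $u_n\to 0$ strongly in $H^1_0(B)$, then show that the Nehari identity together with the subcriticality $\lambda<\lambda_1$ prevents this.

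The first stage rests on the pointwise inequality
\[
 f_\e(s)s-2F_\e(s)\ge \tfrac{\lambda}{2}s^4\qquad (s\in\R),
\]
which I would verify by differentiating $\Phi_\e(s):=f_\e(s)s-2F_\e(s)$: for $s\ge 0$, a direct computation gives $\Phi_\e'(s)=sf_\e(s)\bigl(2s+(1+\e)s^\e\bigr)\ge 2\lambda s^3$, and integrating from $0$ yields the bound (the case $s\le 0$ follows by evenness). On $\mathcal{N}_{\e_n}$ this rewrites as $2I_{\e_n}(u_n)\ge \tfrac{\lambda}{2}\int_B u_n^4\,dx$, so $\|u_n\|_{L^4}\to 0$. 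Using the \emph{upper} bound $\|u\|^2\le K'$ of Lemma \ref{lem:bdd} (whose proof, following claim 1 on p.~404 of \cite{A1}, does not rely on the present lemma), $(u_n)$ is bounded in $H^1_0(B)$, so up to a subsequence $u_n\rightharpoonup u_0$ weakly in $H^1_0(B)$ and $u_n\to u_0$ in every $L^p$ by Rellich; the $L^4$ convergence forces $u_0=0$. Since $\int_B f_{\e_n}(u_n)u_n\,dx=\|u_n\|^2$ is bounded, Lemma \ref{lem:F} then yields $\int_B F_{\e_n}(u_n)\,dx\to 0$, and therefore
\[
 \|u_n\|^2=2I_{\e_n}(u_n)+2\int_B F_{\e_n}(u_n)\,dx\to 0.
\]

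In the second stage I would exploit the Nehari identity $\|u_n\|^2=\lambda\int_B u_n^2 e^{u_n^2+|u_n|^{1+\e_n}}\,dx$ by splitting the exponential as $1+(e^{u_n^2+|u_n|^{1+\e_n}}-1)$:
\[
 \|u_n\|^2=\lambda\int_B u_n^2\,dx+\lambda\int_B u_n^2\bigl(e^{u_n^2+|u_n|^{1+\e_n}}-1\bigr)\,dx.
\]
The first term is at most $(\lambda/\lambda_1)\|u_n\|^2$ by the Poincar\'e inequality. For the second I would combine $e^x-1\le x e^x$ with Cauchy--Schwarz: one factor is controlled by the 2D Sobolev embedding $H^1_0(B)\hookrightarrow L^p$ for every finite $p$, giving $\|u_n\|^{3+\e_n}$ up to constants; the other factor, $\bigl(\int_B e^{2(u_n^2+|u_n|^{1+\e_n})}\,dx\bigr)^{1/2}\le e\bigl(\int_B e^{4u_n^2}\,dx\bigr)^{1/2}$, is bounded by the Moser--Trudinger inequality for $\|u_n\|$ small enough (fine since $\|u_n\|\to 0$). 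The resulting estimate is $o(\|u_n\|^2)$, and altogether
\[
 \|u_n\|^2\le \bigl(\tfrac{\lambda}{\lambda_1}+o(1)\bigr)\|u_n\|^2,
\]
which, since $\lambda<\lambda_1$, forces $u_n\equiv 0$ for large $n$ --- contradicting $u_n\in\mathcal{N}_{\e_n}$.

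The delicate step is the quantitative improvement of Poincar\'e in the second stage: one has to show that the correction $\int_B u_n^2(e^{u_n^2+|u_n|^{1+\e_n}}-1)\,dx$ is genuinely of higher order than $\|u_n\|^2$, which is where both the Moser--Trudinger control of the exponential (via the crude bound $|s|^{1+\e}\le s^2+1$ to absorb the $\e$-dependent exponent uniformly) and the smallness of $\|u_n\|$ are used in concert. Once $\|u_n\|$ is driven to $0$ by the first stage, this reduces to a routine Sobolev estimate with uniform constants in $\e$.
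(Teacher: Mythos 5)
Your proof is correct and supplies exactly the argument the paper outsources to Step 1 of Lemma 3.4 in \cite{A1}: the contradiction via the Poincar\'e inequality (using $\lambda<\lambda_1$) combined with a Moser--Trudinger estimate showing that the exponential correction is of higher order once $\|u_n\|\to 0$. Your appeal to the upper bound of Lemma \ref{lem:bdd} is not circular, since only the lower bound there is derived from the present lemma.
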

\begin{proof} If not, we have sequences $(\e_n)\subset \R^+$ and $(u_n)\subset \mathcal{N}_{\e_n}$ such that $\lim_{n\to \infty}I_{\e_n}(u_n)=0$. Then 
since $\lambda<\lambda_1$, analogously with Step 1 in the proof of Lemma 3.4 in \cite{A1}, we can get a contradiction. This proves the lemma.
\end{proof} 
\begin{lemma}\label{lem:cpt} Let $(\mu_n)\subset \R^+$ and $(u_n)\subset H^1_0(B)$ be sequences such that $\mu_n\le 1$ for all $n$ and further, 
\[
\begin{split}
&J_n(u_n):=\int_B|\nabla u_n|^2dx-\mu_n\int_BF_{\e_n}(u_n)dx\to c\in (0,2\pi)\text{ and }\\
&J_{n}'(u_n)\to0 \text{ in }H^{-1}(B),
\end{split}
\]
as $n\to \infty$. Then $u_n\to u$ in $H^1_0(B)$ up to a subsequence.
\end{lemma}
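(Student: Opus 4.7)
The plan is to run the standard Palais--Smale compactness argument for Moser--Trudinger type functionals below the threshold, as in \cite{A} and \cite{FMR}; essentially the same scheme is carried out for a specific sequence in the proof of Proposition \ref{prop:outer}. First I would show $\|u_n\|$ is bounded in $H^1_0(B)$. Combining $J_n(u_n)=c+o(1)$ with $\langle J_n'(u_n),u_n\rangle=o(\|u_n\|)$ one gets
\[
c+o(1)+o(\|u_n\|)=\tfrac{\mu_n}{2}\int_B\bigl[f_{\e_n}(u_n)u_n-2F_{\e_n}(u_n)\bigr]\,dx,
\]
and since $F_\e(t)/(tf_\e(t))\to 0$ as $|t|\to\infty$ uniformly for small $\e$, an Ambrosetti--Rabinowitz--type inequality $tf_\e(t)\ge \theta F_\e(t)$ with some $\theta>2$ holds outside a large compact set; plugging this in yields boundedness.

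Passing to a subsequence, $\mu_n\to\mu_0\in[0,1]$, $u_n\rightharpoonup u$ in $H^1_0(B)$, $u_n\to u$ in every $L^p(B)$ and a.e. Using that $\int_B f_{\e_n}(u_n)u_n\,dx$ is bounded (from the previous step plus $\langle J_n'(u_n),u_n\rangle=o(1)$), Lemma \ref{lem:F} provides $\int F_{\e_n}(u_n)\to\int F_0(u)$ and $f_{\e_n}(u_n)\to f_0(u)$ in $L^1(B)$ (the latter from a.e.\ convergence together with convergence of $\int|f_{\e_n}(u_n)|=\int f_{\e_n}(|u_n|)$). Passing to the limit in $\langle J_n'(u_n),\varphi\rangle=o(1)$ for $\varphi\in C^\infty_0(B)$, the limit $u$ weakly solves $-\Delta u=(\mu_0/2)f_0(u)$ in $B$; in particular $\|u\|^2=(\mu_0/2)\int f_0(u)u\ge \mu_0\int F_0(u)$, thanks to $tf_0(t)\ge 2F_0(t)$.

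The crux is to obtain a uniform $L^q$ bound on $f_{\e_n}(u_n)$ for some $q>1$. Set $A:=\lim_n\|u_n\|^2$ and $v_n:=u_n/\|u_n\|\rightharpoonup v_0$. From $J_n(u_n)=\|u_n\|^2-\mu_n\int F_{\e_n}(u_n)$ and Lemma \ref{lem:F} we get $A=c+\mu_0\int F_0(u)$, so
\[
A(1-\|v_0\|^2)=A-\|u\|^2\le A-\mu_0\int_B F_0(u)\,dx = c<2\pi.
\]
Combined with the bound $|f_\e(t)|\le C e^{\beta t^2}$ valid for any $\beta>1$ and $\e$ small, Lions' concentration--compactness lemma (Theorem I.6 of \cite{L}) applied to $v_n$ then produces $\int_B e^{q\beta u_n^2}\,dx\le C$ for suitable $q,\beta>1$ close to $1$, whence $\int_B|f_{\e_n}(u_n)|^q\,dx\le C$. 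The degenerate case $v_0=0$ forces $u=0$, $A=c<2\pi$, and is handled directly by plain Trudinger--Moser; the case $\|v_0\|=1$ already gives $u_n\to u$ strongly in $H^1_0(B)$.

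Finally, to upgrade weak to strong convergence I test the relation $\langle J_n'(u_n),u_n-u\rangle=o(1)$, writing
\[
2\int_B\nabla u_n\cdot\nabla(u_n-u)\,dx=\mu_n\int_B f_{\e_n}(u_n)(u_n-u)\,dx+o(1);
\]
H\"{o}lder's inequality, the $L^q$ bound on $f_{\e_n}(u_n)$ and $u_n\to u$ in $L^{q'}(B)$ make the right--hand side $o(1)$, while $\int\nabla u\cdot\nabla(u_n-u)\to 0$ by weak convergence, so $\|u_n-u\|^2\to 0$. The main obstacle is the third step, where the hypothesis $c<2\pi$ is used sharply: it is exactly the bound $\mu_n\le 1$ that produces the inequality $\mu_0\int F_0(u)\le \|u\|^2$, and hence $A(1-\|v_0\|^2)\le c$, which places us safely below the Lions threshold $4\pi$ and lets the exponential integrability survive in the limit.
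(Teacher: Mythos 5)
Your overall scheme is the right one: the paper itself gives no argument for this lemma beyond a citation to Adimurthi's compactness result, and the detailed implementation you propose (boundedness via an Ambrosetti--Rabinowitz--type inequality, identification of the weak limit, a uniform $L^q$ bound on $f_{\e_n}(u_n)$ obtained from Lions' concentration--compactness below the threshold, and the final H\"older upgrade to strong convergence) is exactly the argument the authors carry out explicitly in the proof of Proposition \ref{prop:outer} for the special case $\mu_n\equiv 1$. Your arithmetic is also robust to the (apparently missing) factor $\tfrac12$ in the definition of $J_n$: with either normalization the quantity $\lim\|u_n\|^2\,(1-\|v_0\|^2)$ stays strictly below $4\pi$.

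There is, however, one genuine gap: the assertion that $\int_Bf_{\e_n}(u_n)u_n\,dx$ is bounded. From $\langle J_n'(u_n),u_n\rangle=o(1)$ and the $H^1$ bound you only get that $\mu_n\int_Bf_{\e_n}(u_n)u_n\,dx$ is bounded, which controls $\int_Bf_{\e_n}(u_n)u_n\,dx$ itself only when $\liminf_n\mu_n>0$. The lemma is invoked in the paper precisely with $\mu_n=r_{i,\e_n}^2\to0$ (proof of Lemma \ref{lem:energy3}), and there one has $\|u_n\|^2=\mu_n\int f_{\e_n}(u_n)u_n\ge K$ while $\|u_n\|_\infty\to\infty$, so $\int f_{\e_n}(u_n)u_n\to\infty$: your claim fails in exactly the case the paper needs, and with it the hypotheses of Lemma \ref{lem:F}, which you use both to identify the limit equation and to compute $A=c+\mu_0\int F_0(u)$ before the $L^q$ bound is available. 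The repair is routine but must be said: run the truncation/Vitali argument of Lemma \ref{lem:F} on $\mu_nf_{\e_n}$ and $\mu_nF_{\e_n}$ instead of on $f_{\e_n}$ and $F_{\e_n}$. Since $F_\e(t)\le\de_K\,tf_\e(t)$ for $|t|\ge K$ with $\de_K\to0$, the tail estimates $\int_{\{|u_n|>K\}}\mu_n|f_{\e_n}(u_n)|\le K^{-1}\mu_n\int f_{\e_n}(u_n)u_n$ and $\int_{\{|u_n|>K\}}\mu_nF_{\e_n}(u_n)\le\de_K\,\mu_n\int f_{\e_n}(u_n)u_n$ only require the bound you actually possess, and they yield $\mu_nf_{\e_n}(u_n)\to\mu_0f_0(u)$ in $L^1(B)$ and $\mu_n\int F_{\e_n}(u_n)\to\mu_0\int F_0(u)$ along a subsequence with $\mu_n\to\mu_0$. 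With these replacements every subsequent step of your proof goes through unchanged (the $L^q$ bound on $f_{\e_n}(u_n)$ and the final H\"older step never used the unweighted $L^1$ information).
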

\begin{proof}
Similar to 1) on p404 in \cite{A1}.
\end{proof}

\end{document}